\documentclass[]{interact}

\pdfoutput=1

\usepackage[numbers,sort&compress,merge]{natbib}

\usepackage{amsmath}
\usepackage{amssymb}
\usepackage{amsthm}
\usepackage{mathtools}
\usepackage{subcaption}
\captionsetup{compatibility=false}
\usepackage{pgfplots}
\usepackage{tabularray}
\usepackage{appendix}
\usepackage{hyperref}
\usepackage{floatrow}

\usetikzlibrary{external}
\tikzexternalize[prefix=tikz/,optimize command away=\includepdf]

\newcommand{\V}[1]{\textbf{#1}}

\theoremstyle{plain}
\newtheorem{theorem}{Theorem}[section]

\theoremstyle{definition}

\newtheorem{example}[theorem]{Example}
\newtheorem*{example*}{Example}

\theoremstyle{remark}
\newtheorem{remark}{Remark}
\newtheorem*{remark*}{Remark}

\bibpunct[, ]{[}{]}{,}{n}{,}{,}

\begin{document}

\title{Numerics with Coordinate Transforms for Efficient Brownian Dynamics Simulations}

\author{
\name{Dominic Phillips\textsuperscript{a}\thanks{CONTACT: \textsuperscript{a}dominic.phillips@ed.ac.uk, \textsuperscript{b}b.leimkuhler@ed.ac.uk, \textsuperscript{c}cmatthews@adobe.com}, Benedict Leimkuhler\textsuperscript{b} and Charles Matthews\textsuperscript{c}}
\affil{\textsuperscript{a}School of Informatics, University of Edinburgh; \textsuperscript{b}School of Mathematics, University of Edinburgh; \textsuperscript{c}Adobe Machine Learning}
}

\maketitle

\begin{abstract}
    Many stochastic processes in the physical and biological sciences can be modelled as Brownian dynamics with multiplicative noise. However, numerical integrators for these processes can lose accuracy or even fail to converge when the diffusion term is configuration-dependent. One remedy is to construct a transform to a constant-diffusion process and sample the transformed process instead. In this work, we explain how coordinate-based and time-rescaling-based transforms can be used either individually or in combination to map a general class of variable-diffusion Brownian motion processes into constant-diffusion ones. The transforms are invertible, thus allowing recovery of the original dynamics. We motivate our methodology using examples in one dimension before then considering multivariate diffusion processes. We illustrate the benefits of the transforms through numerical simulations, demonstrating how the right combination of integrator and transform can improve computational efficiency and the order of convergence to the invariant distribution. Notably, the transforms that we derive are applicable to a class of multibody, anisotropic Stokes-Einstein diffusion that has applications in biophysical modelling.
\end{abstract}

\begin{keywords}
Brownian Dynamics, 
Lamperti Transform, Time Rescaling,
Stochastic Differential Equations, Sampling 
\end{keywords}

\section{Introduction}
\label{sec:introduction}
Many problems in finance and the physical and biological sciences can be modelled as instances of Brownian dynamics. Examples include portfolio optimization \cite{merton_lifetime_1969}, options pricing \cite{black_pricing_1973}, diffusion in biological membranes and nanocomposites \cite{saffman_brownian_1975, fakhri_brownian_2010}, cell migration \cite{selmeczi_cell_2008}, protein folding \cite{best_coordinate-dependent_2010}, neuronal dynamics \cite{johnston_foundations_1995}, population genetics \cite{maruyama_stochastic_1983}, MRI imaging \cite{alexander_diffusion_2007}, ecological modelling \cite{varughese_statistical_2013} and score-based diffusion for generative AI \cite{song_score-based_2020}. In these contexts, configuration-dependent diffusion is often critical to the modelling assumption but it can introduce problems for numerical modelling. It can make the problem stiffer by introducing unbounded noise or bounds on the state variables. Additionally, it can reduce the weak order of convergence of an integrator. This is a problem for simulation because sampling becomes more expensive. It's also a problem for estimation, such as when fitting a Brownian dynamics `grey-box' model, since high accuracy is required for the Extended Kalman Filter approximations to be meaningful \cite{kristensen_parameter_2004}. 

One remedy for these problems is to design sophisticated, derivative-free numerical integrators that maintain high-accuracy convergence for certain classes of configuration-dependent diffusion. In recent years, many authors have contributed to a series of improvements and various integrators have been proposed \cite{milstein_stochastic_2004, rosler_rungekutta_2004, rosler_second_2009, lamba_adaptive_2007, anderson_weak_2010}. However, a common drawback of these integrators is the requirement of multiple evaluations of the force and diffusion tensor per time step. This can be prohibitively expensive for multi-body simulations, where the evaluation of these terms is the computational bottleneck \cite{sprinkle_brownian_2019}. Furthermore, many of these integrators place restrictions on the functional form of the diffusion, often requiring commutative noise, which is not suitable for all applications. 

An alternative approach, preferred whenever possible, is to transform the original process into a process with constant diffusion, thereby mitigating the sampling challenges introduced by multiplicative noise \cite{ait-sahalia_closed-form_2008}. For certain classes of stochastic differential equations (SDEs), this is achieved through a Lamperti transform, a type of non-linear change of state variables \cite{moller_state_2010, luschgy_functional_2006}. The resulting constant-diffusion process might exhibit enhanced numerical stability and can be sampled with computationally cheap, high weak-order integrators. Take for example the Black-Scholes model from financial mathematics, which describes geometric Brownian motion on the positive real axis. When simulated with sufficiently large step sizes, positivity can be violated which results in numerical instability. Here the Lamperti transform approach is especially valuable since it is possible to simultaneously construct a transform to unit diffusion whilst also removing the positivity constraint \cite{de_boer_lamperti_2020}. 

An alternative to a spatial coordinate transform is to apply a smooth, configuration-dependent time-rescaling \cite{oksendal_stochastic_2003, agazzi_introduction_2023}. 
Recently, this has been explored as a method for adaptive step size control in Langevin dynamics sampling \cite{leroy_adaptive_2024}. In this work, we take a different perspective and consider time-rescaling alongside the Lamperti transform as a strategy to remove multiplicative noise. 

In this article, we derive conditions for applying the Lamperti and time-rescaling transforms, either separately or in combination, to achieve constant diffusion in multivariate Brownian dynamics with multiplicative noise. Through numerical experiments, we show how if the right choice of numerical integrator is used for the transformed process, then this leads to an efficient, second-order weak sampling method that involves just one force and one diffusion evaluation per time step. Furthermore, we show how the original autocorrelation function and evolving distribution can be accurately recovered by applying an inverse transform to the samples. We also draw connections with rare event sampling, illustrating by means of a one-dimensional example how coordinate transforms can be used in that context.

The article is structured as follows. Section \ref{sec:preliminaries} introduces Brownian dynamics and the Lamperti and time-rescaling transformations. Section \ref{sec:OneDimensionTransforms} explores in detail how these transforms apply to one-dimensional Brownian dynamics, including rare event sampling. Section \ref{sec:MultivariateTheory} extends the theory of transforms to multivariate Brownian dynamics. Numerical experiments in one dimension are presented in Section \ref{sec:NumericalExp1D} and multivariate experiments are presented in Section \ref{sec:MultivariateExperiments}. Conclusions are presented in Section \ref{sec:Conclusions}.

\section{Preliminaries}
\label{sec:preliminaries}
\subsection{Brownian dynamics}

Brownian dynamics is defined through an It\^o stochastic differential equation (SDE), which in one dimension reads \cite{pavliotis_stochastic_2014}
\begin{equation}
\label{1Dvariable2}
    dx_t = - D(x_t) \frac{dV(x_t)}{dx}dt + kT \frac{d D(x_t)}{dx}dt + \sqrt{2 kT D(x_t)} dW_t,
\end{equation}
where $t \in \mathbb{R}_{>0}$ is time, $x_t \in \mathbb{R}$ is the state variable, $W_t$ is a one-dimensional Wiener process, $V : \mathbb{R} \xrightarrow{} \mathbb{R}$ is a potential energy function, $D : \mathbb{R} \xrightarrow{} \mathbb{R}_{>0}$ is the diffusion coefficient, $k$ is the Boltzmann constant and $T$ is the temperature in degrees Kelvin. Note that the diffusion coefficient $D(x)$ is a function of $x$ which means that we have configuration-dependent noise, also known as multiplicative noise. 

In higher dimensions, \eqref{1Dvariable2} generalises to
\begin{equation}
\label{multivariateBrownianDiffusion}
d\V{X}_t = -(\V{D}(\V{X}_t)\V{D}(\V{X}_t)^T) \nabla V(\V{X}_t) dt + kT \text{div}(\V{D}\V{D}^T)(\V{X}_t) dt + \sqrt{2 kT} \V{D}(\V{X}_t)d\V{W}_t,
\end{equation}
where $\V{X}_t \in \mathbb{R}^n$ is the state variable, $\V{W}_t$ is an n-dimensional Wiener process, $V: \mathbb{R}^n \xrightarrow{} \mathbb{R}$ is a potential function, and $\V{D}\V{D}^T: \mathbb{R}^n \xrightarrow{} \mathbb{R}^n \times \mathbb{R}^n$ is a configuration-dependent diffusion tensor that is everywhere positive definite. The matrix divergence in \eqref{multivariateBrownianDiffusion} is defined as the column vector resulting from applying the vector divergence to each matrix row. We identify $\V{DD}^T$ (not $\V{D}$) as the diffusion tensor to avoid taking a matrix square root in the noise term.

We assume that $V$ is confining in a way that ensures ergodicity of the dynamics. This is true for any $V$ that grows sufficiently quickly as $|\V{X}| \xrightarrow[]{} \infty$, for details see Pavliotis (2014) \cite{pavliotis_stochastic_2014}. One consequence of ergodicity is that there exists a unique invariant distribution $\rho(\V{X})$ - a probability distribution that does not change under the process dynamics. For Brownian dynamics, the invariant distribution is the canonical ensemble; $\rho(\V{X}) \propto \exp{\left(- V(\V{X})/kT\right)}$. Another consequence of ergodicity is that the long-time time average of any L1 integrable function $f$ converges to its phase-space average as the simulation time goes to infinity, i.e.
\begin{align}
\label{eqn:theErgodicTheorem}
\int_{\mathbb{R}^n} f(\V{X})\rho(\V{X})d\V{X} = \lim_{T_{\text{sim}} \rightarrow \infty} \frac{1}{T_{\text{sim}}} \int_{t=0}^{T_{\text{sim}}} f(\V{X}_t)dt.
\end{align}
In the remainder of this paper, we shall refer to \eqref{eqn:theErgodicTheorem} as `the ergodic theorem'. 

\subsection{The Lamperti transformation}

\label{lampertiTransformTheory}
Consider a time-homogeneous Itô SDE of the form

\begin{equation}
\label{generalSDE}
d\V{X}_t = f(\V{X}_t)dt + \mathbf{\sigma} (\V{X}_t)\V{R}d\V{W}_t,
\end{equation}
\noindent
where $\V{X}_t \in \mathbb{R}^n$ is the state vector, $\V{W}_t \in \mathbb{R}^m$ is an $m$-dimensional Wiener process, $f : \mathbb{R}^n \xrightarrow{} \mathbb{R}^n$ is a drift function, $\sigma : \mathbb{R}^n \xrightarrow{} \mathbb{R}^n \times \mathbb{R}^m$ is a configuration-dependent diffusion matrix and $\V{R} \in \mathbb{R}^m \times \mathbb{R}^m$ is a constant matrix. 

The \textit{Lamperti transform} \cite{moller_state_2010}, $\xi \vcentcolon \mathbb{R}^n \xrightarrow{} \mathbb{R}^n$ is an invertible coordinate transformation that when applied to \eqref{generalSDE}, results in a process $\V{Y}_t \vcentcolon= \xi(\V{X}_t)$ having unit diffusion. The required transform can be derived by applying the multivariate version of It\^o's lemma and setting the coefficients of the noise terms to one. This gives a set of ODEs that $\xi$ must satisfy. A consistent solution exists if: (i) the dimensions of the state variable and the noise are the same, (ii) $\V{R}$ is invertible, (iii) $\sigma(\V{X}_t)$ has the diagonal form:
\begin{equation}
\label{sigmaRestrictionLamperti}
\sigma(\V{X}_t) = \text{diag}(\sigma_1(X_{1,t}), \sigma_2(X_{2,t}), \dots, \sigma_n(X_{n,t})),
\end{equation}
where $\sigma_i : \mathbb{R} \xrightarrow{} \mathbb{R}_{>0}$ for all $i \in \{1,2,\dots,n\}$. The solution is given by
\begin{equation}
\label{multivariateTransformedLampertiProcess}
    \V{Y}_t = \xi(\V{X}_t) = \V{R}^{-1}\phi(\V{X}_t),
\end{equation}
where $\phi(\V{X}_t) =  [\phi_1(X_{1,t}), \phi_2(X_{2,t}), \dots, \phi_n(X_{n,t})]^T$ and $\phi_j : \mathbb{R} \xrightarrow{} \mathbb{R}$ is the invertible function:
\begin{equation}
\label{psi_j}
\phi_j(x) \vcentcolon= \int_{x_{j,0}}^{x} \frac{1}{\sigma_j(z)} dz,
\end{equation}
with $x_{j,0}$ being an arbitrary constant chosen from the state space of $X_{j}$. The governing equation of $\V{Y}_t$ then reads:
\[
dY_{i,t} = \sum_{j=1}^n R_{ij}^{-1} \left(\frac{f_j(\phi^{-1}(\V{R}\V{Y}_t))}{\sigma_j(\phi^{-1}_j((\V{R}\V{Y}_t)_j))} - \frac{1}{2}\frac{\partial}{\partial x} \sigma_j \left(x\right)\Bigg|_{x=\phi^{-1}_j((\V{R}\V{Y}_t)_j)}\right)dt + dW_{i,t}.
\]

The Lamperti transform can be used as a tool to find exact solutions for specific classes of SDEs \cite{de_boer_lamperti_2020} or to perform statistical inference for SDEs \cite{craigmile_statistical_2023}, but the extent to which the Lamperti transform is useful in practice is limited by the requirement in \eqref{sigmaRestrictionLamperti}. We consider only time-homogeneous SDEs in this paper, although the Lamperti transform can also be extended to certain time-inhomogeneous problems \cite{moller_state_2010}.

\subsection{The time-rescaling transform}
\label{timeRescalingTheory}
An alternative approach for transforming an SDE to constant diffusion is the time-rescaling transformation (see, for instance, \cite{oksendal_stochastic_2003} Chapter 8 and \cite{agazzi_introduction_2023} Chapter 8). As before, consider an SDE of the form in Equation \eqref{generalSDE}. We introduce a configuration-dependent time rescaling, denoted as $t \rightarrow \tau(t)$, with Jacobian $\frac{dt}{d\tau}(\V{X}_t)=g(\V{X}_{\tau})$. The governing equation for the time-rescaled process becomes
\begin{equation}
\label{timeRescaledEqnsBeforeSub}
    d\V{X}_{\tau} = f(\V{X}_\tau)g\left(\V{X}_{\tau}\right)d\tau + \mathbf{\sigma} (\V{X}_\tau)\V{R}\sqrt{g\left(\V{X}_{\tau}\right)}d\V{W}_\tau,
\end{equation}
where we have replaced $dt$ with $\frac{dt}{d\tau}  d\tau = g(\mathbf{X}_\tau)d\tau$ using a change of variables. The factor $\sqrt{g(\mathbf{X}_\tau)}$ in the noise arises from the scaling property of Brownian motion. 

A transformation to unit diffusion is possible if and only if: (i) the dimensions of the state variable and the noise are the same, (ii) $\V{R}$ is invertible, (iii) the diffusion matrix $\sigma(\V{X}_t)$ has the diagonal form:
\begin{equation}
\label{eqn:timeRescalingAnsatz}
    \sigma(\V{X}_t) = \text{diag}(D(\V{X}_t), D(\V{X}_t), \dots, D(\V{X}_t)),
\end{equation}
an isotropic matrix with arbitrary configuration dependence. 

To remove the configuration dependence from the diffusion term, we choose $g(\V{X}) = 1/D^2(\V{X})$. Substituting this and the isotropic ansatz \eqref{eqn:timeRescalingAnsatz} into \eqref{timeRescaledEqnsBeforeSub} simplifies the governing equations to
\[
\label{eqn:timeRescalingFinal}
    d\V{X}_{\tau} = \frac{f(\V{X}_\tau)}{D^2(\V{X}_{\tau})}d\tau + \V{R}d\V{W}_\tau.
\]
We may then transform to unit diffusion through a linear transform
\[
    \label{eqn:timeRescalingLinearTransform}
    \V{Y}_{\tau} = \V{R}^{-1} \V{X}_{\tau}.
\]
Note that time-rescaling method can also be used to transform to an arbitrary isotropic diffusion $\tilde{D}(\mathbf{X})$ by making the choice $g(\V{X})=(\tilde{D}(\V{X})/D(\V{X}))^2$.

\section{Transforms for one-dimensional Brownian dynamics}
\label{sec:OneDimensionTransforms}
In this section, we consider the Lamperti and time-rescaling transforms applied to one-dimensional Brownian dynamics, comparing the two approaches. For detailed proofs of all results, see Appendix \ref{appdx:Proofs}.

\subsection{The Lamperti transform} 

In one dimension, the Lamperti transform emerges as an instance of a transformational symmetry inherent in Brownian dynamics. This symmetry states that, under an invertible coordinate transformation $x \xrightarrow{} y(x)$, the one-dimensional Brownian dynamics \eqref{1Dvariable2} with potential $V(x)$ and diffusion function $D(x)$ is transformed into another Brownian dynamics process with potential $\hat{V}(y)$ and diffusion function $\hat{D}(y)$ given by
\begin{equation}
\label{eqn:lamperti1DPotentialAndD}
\begin{split}
    \hat{V}(y) &= V(x(y)) + kT \ln \left\vert \frac{dy}{dx}(x(y)) \right\vert,  \\
    \hat{D}(y) &= {D}(x(y))\left( \frac{dy}{dx}(x(y))\right)^2,
\end{split}
\end{equation}
where $y \xrightarrow{} x(y)$ is the inverse transformation (Theorem \ref{thm:transformationalSymmetry}). 

\noindent
Setting $\hat{D}(y)=1$ and solving for $y(x)$ yields the one-dimensional Lamperti transform
\begin{equation}
\label{eqn:Lamperti1DIntegral}
y(x) = \int_{x_0}^{x} \left( \frac{1}{{D}(z)}\right)^{\frac{1}{2}}dz,
\end{equation}
which is equation \eqref{psi_j} with $\sigma(z) = \sqrt{D(z)}$. 
\noindent
From \eqref{eqn:Lamperti1DIntegral} we have $\frac{dy}{dx} = \left( \frac{1}{{D}(x)}\right)^{\frac{1}{2}}$. Substituting this result into \eqref{eqn:lamperti1DPotentialAndD}, we arrive at the transformed, constant-diffusion dynamics:
\begin{equation}
\label{eqn:1DLampertiTransform}
dy_t = - \frac{d\hat{V}(y_t)}{dy}dt + \sqrt{2kT}dW_t, 
\end{equation}
\noindent
with an effective potential given by
\[
\hat{V}(y) = V(x(y)) - \frac{kT}{2} \ln{D(x(y))}.
\]
Note that $\hat{V}(y)$ implicitly depends on $x_0$ in \eqref{eqn:Lamperti1DIntegral} through the inverse transform $x(y)$. Since $x_0$ changes the vertical offset of $y(x)$, it therefore changes the horizontal offset of $x(y)$. Changing $x_0$ thus corresponds to horizontally translating $\hat{V}(y)$, which shifts the mean position but otherwise has no physical consequence for the dynamics.

By writing down the ergodic theorem for the process \eqref{eqn:1DLampertiTransform} and transforming back to $x$-space, it can be shown that (Theorem \ref{thm:LampertiPhaseSpace1D})
\begin{equation}
\label{eqn:1DLampertiErgodicTheorem}
\int_{-\infty}^{\infty} f(x)\rho(x)dx =  \lim_{T_{\text{sim}} \rightarrow \infty} \frac{1}{T_{\text{sim}}} \int_{t=0}^{T_{\text{sim}}} f(x(y_t)) dt,
\end{equation}
\noindent
so trajectories of the transformed process can be used directly to approximate ensemble averages with respect to $\rho(x)$, the invariant distribution of the original process. Furthermore, by choosing $f(x)=I(x \in [a,b])$ (the indicator function on the interval $[a,b]$) it can be shown that invariant distribution $\rho(x)$ of the original process and the invariant distribution $\hat{\rho}(y)$ of the Lamperti-transformed process are related by the equation $\rho(x) = \hat{\rho}(x(y))\frac{dy}{dx}$ (Theorem \ref{thm:InvariantLamperti}). Similarly, if we have a trajectory of discrete samples $y_n$ with constant step size $h$, then choosing $f(x)=I(x \in [a,b])$ in \eqref{eqn:1DLampertiErgodicTheorem} leads to a simple counting formula to estimate finite-width integrals of the original invariant distribution:

\begin{equation}
\label{reweightingLamperti}
\int_a^b \rho(x) dx \approx \lim_{N \rightarrow \infty} \frac{1}{N}\sum_{n=0}^N I(x(y_n) \in [a,b]).
\end{equation}
This approximation becomes exact in the limit $h \xrightarrow{} 0$. 

\subsection{The time-rescaling transform}

Consider a configuration-dependent time rescaling $t \xrightarrow{} \tau(t)$ with $\frac{dt}{d\tau}(x) = g(x)$. It can be shown that applying this to the original Brownian dynamics \eqref{1Dvariable2} results in another Brownian dynamics process but with a modified potential $\hat{V}(x)$ and diffusion coefficient $\hat{D}(x)$, given by (Theorem \ref{thm:TimeRescaling1D}):
\begin{equation}
\label{eqn:effectivePotentialTimeRescaling}
\begin{split}
    \hat{V}(x) &= V(x) + kT \ln{g(x)}, \\
    \hat{D}(x) &= g(x)D(x).
\end{split}
\end{equation}
\noindent
Setting $\hat{D}(x)=1$ implies $g(x) = \frac{1}{D(x)}$. Substituting this result into \eqref{eqn:effectivePotentialTimeRescaling}, we arrive at the time-rescaled, constant-diffusion dynamics:
\[
    dx_\tau = - \frac{d\hat{V}(x_\tau)}{dx}d\tau + \sqrt{2kT}dW_\tau,
\]
\noindent
where
\[
 \hat{V}(x) = V(x) - kT \ln{D(x)}    
\]
\noindent
is the effective potential. Notably, these dynamics differ from those obtained through the Lamperti transform. 

By applying a time rescaling to the ergodic theorem of the original process $x_t$, it can be shown that (Theorem \ref{thm:TimeRescalingPhaseSpace1D}):
\begin{equation}
\label{reweightingTimeRescaling0}
\int_{-\infty}^{\infty} f(x) \rho(x) dx = \lim_{\mathcal{T}_{\text{sim}} \rightarrow \infty} \frac{\int_{\tau=0}^{\mathcal{T}_{\text{sim}}} f(x_\tau) g(x_\tau) d\tau}{\int_{\tau=0}^{\mathcal{T}_{\text{sim}}} g(x_\tau) d\tau},
\end{equation}
so trajectories of the transformed process can be used directly to approximate ensemble averages with respect to $\rho(x)$, the invariant distribution of the original process.

Discritising with a constant step size $h$ in $\tau$-time, and setting $f(x) = I(x \in [a,b])$, leads to a counting formula to estimate finite-width integrals of the original invariant distribution: 
\begin{equation}
\label{reweightingTimeRescaling}
    \int_{a}^{b} \rho(x) dx \approx  \lim_{N \rightarrow \infty} \frac{\sum_{n=0}^{N} g(x_{\tau_n})I(x_{\tau_n} \in [a, b])}{\sum_{n=0}^N g(x_{\tau_n})}.
\end{equation}
This approximation becomes exact in the limit $h \xrightarrow{} 0$.

\begin{remark*}
The proof of \eqref{reweightingTimeRescaling0} and \eqref{reweightingTimeRescaling} does not require Brownian dynamics, hence these results hold more generally for one-dimensional, time-homogeneous SDEs.
\end{remark*}

\subsection{Comparing the two transform approaches}
\label{sec:comparingApproaches}

\begin{figure}
    \centering
    \includegraphics[scale=0.45]{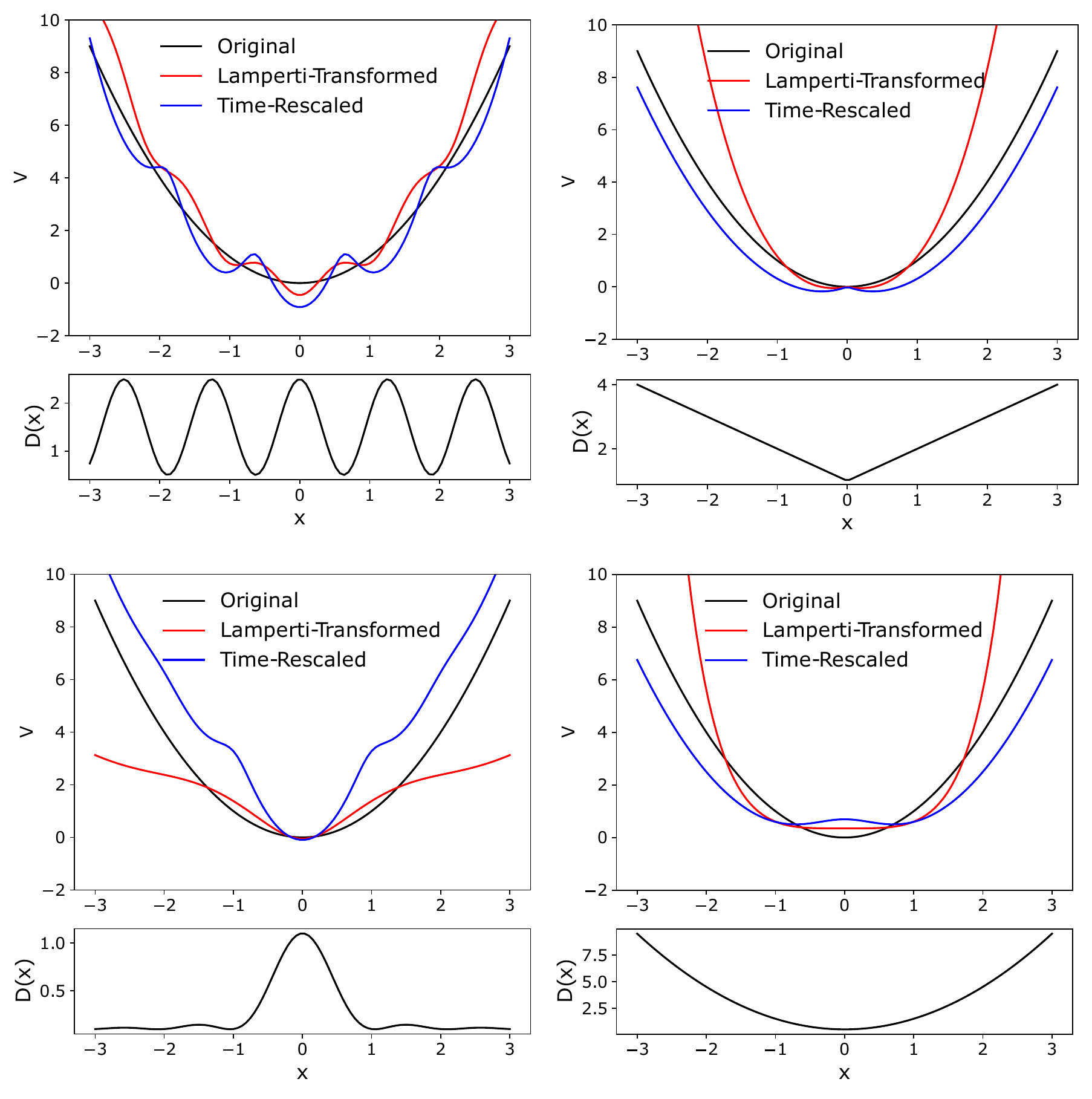}
    \caption{Comparison of the Lamperti and time-rescaling transforms when applied to the same quadratic potential $V(x)=x^2$ for a variety of different diffusion coefficients. The abscissa axis represents the original $x$ coordinate for the time-rescaled potential and is the transformed $y(x)$ coordinate for the Lamperti-transformed potential.}
    \label{fig:comparingTransforms}
\end{figure}


In one dimension, both the Lamperti and time-rescaling transforms are applicable for any $D(x) > 0$. However, whilst the time-rescaling transform can be computed exactly, the Lamperti transform often requires a numerical approximation due to the intractability of the integral \eqref{eqn:Lamperti1DIntegral}. The two transforms also yield different effective potentials. The Lamperti transform tends to increase confinement of the potential in $y$-space wherever $D(x) > 1$ and decrease it wherever $0 < D(x) < 1$, whereas the time-rescaled effective potential is more confining where $\frac{dD}{dx} < 0$ and less confining where $\frac{dD}{dx} > 0$. 
Therefore, deciding which transform is more useful can depend on the functional form of $D(x)$. For rare-event sampling, one should choose whichever transform results in the least-confining effective potential as this improves numerical stability for larger step sizes. Figure \ref{fig:comparingTransforms} compares the effective potentials resulting from the two transforms for various initial diffusion coefficients.  In the next section, we provide a deeper insight into coordinate transforms for rare event sampling. We then derive multivariate versions of the Lamperti and time-rescaling transforms in Section \ref{sec:MultivariateTheory}.

\subsection{Connection to Variable-Diffusion Enhanced Sampling}
\label{sec:EnhancedSampling}

Recall that in this work, we generally consider that $D(x)$ is fixed by the application problem. Due to it being non-constant, it leads to less efficient numerical computations. Hence, the motivation to ``transform it away" into constant noise. Nonetheless, for problems where the primary goal is to explore a potential landscape, as is common in biomolecular modelling \cite{abrams_enhanced_2014}, it can be beneficial to take a different perspective and consider $D(x)$ as a free parameter that can be optimised. For this to be successful, the drawback of corrupting the dynamics must be offset by kinetic benefits, in the form of reduced metastability, gained by modifying the diffusion. A prototype of this idea dates back to the work of Roberts and Stramer (2002), what the authors call \textit{Langevin Tempered Diffusion} \cite{roberts_langevin_2002}. More recently, the optimisation problem for $D(x)$ was formulated precisely by Lelièvre et al. for dynamics without coordinate transforms \cite{lelievre_optimizing_2024}. Work on the Lamperti transform for finding natural reaction coordinates in small proteins was explored in Krivov and Karplus (2008) \cite{krivov_diffusive_2008}. The theoretical results in the previous section also offer new intuition for this class of sampling problem. We illustrate this through a numerical example below. For concreteness, we consider only the time-rescaling transform. Similar results could be obtained using the Lamperti transform.

\begin{example}
\textbf{Enhanced Sampling in a Double Well} 

\begin{figure}
  \centering
  \begin{subfigure}{0.49\textwidth}
    \centering
    \includegraphics[width=\linewidth]{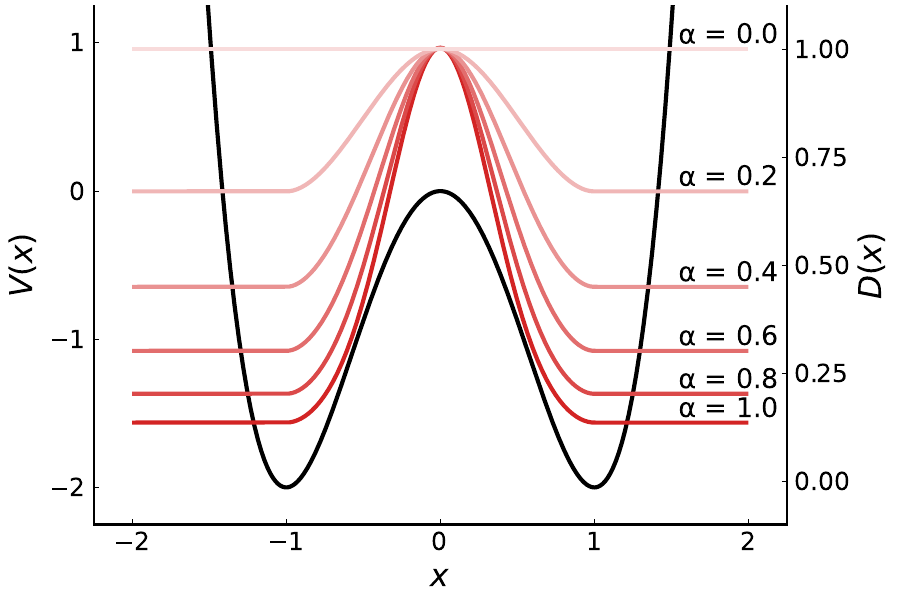}
    \caption{}
  \end{subfigure}
  \hfill
  \begin{subfigure}{0.49\textwidth}
    \centering
\includegraphics[width=\linewidth]{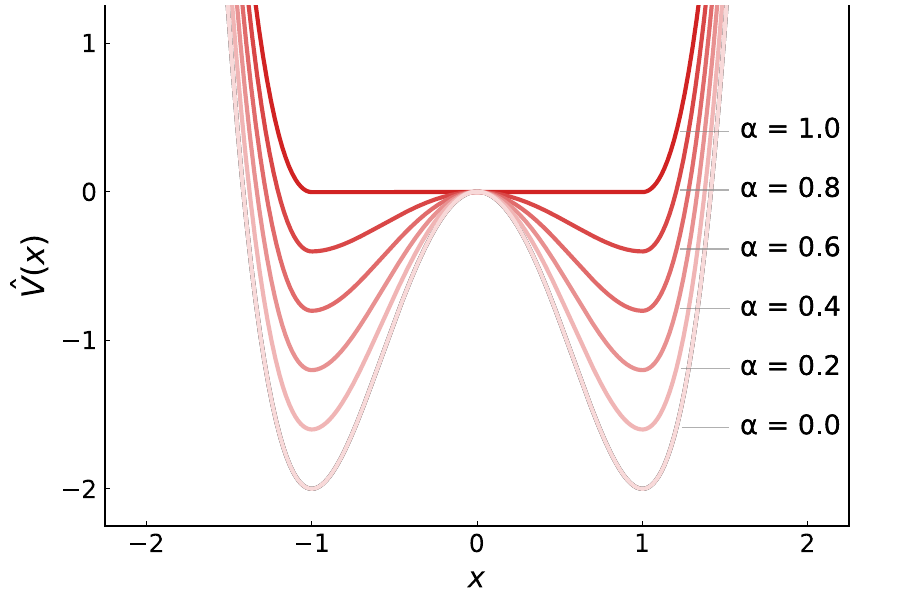}
    \caption{}
  \end{subfigure}
  \caption{(a) Black: the double-well potential $V(x)$. Red: the configuration-dependent diffusion $D(x; \alpha)$ for various $\alpha$ with $kT=1$. (b) Time-rescaled potentials after transforming to constant diffusion $D=1$ with initial diffusion $D(x; \alpha)$. By design, the time-rescaled potentials are independent of $kT$. The transform has the effect of reducing the metastability of the well, removing it entirely when $\alpha = 1$.}
\label{fig:doubleWellPlot}
\end{figure} 

\begin{figure}
    \centering
    \includegraphics[scale=0.65]{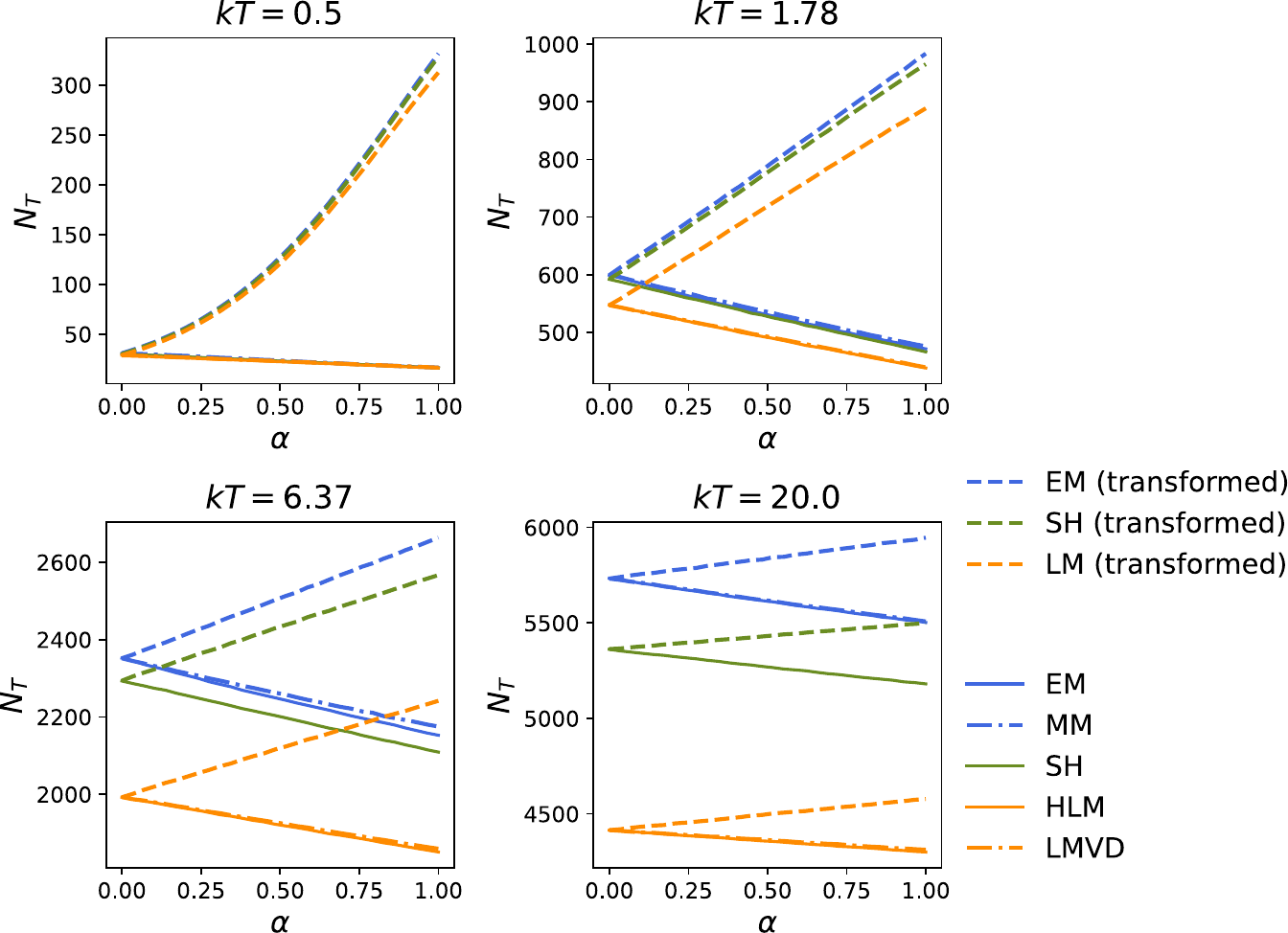}
    \caption{The mean transition counts ($N_T$) vs $\alpha$ for Brownian dynamics trajectories of length $T=1000$ in a double-well potential with a constant step size $h=0.01$. Each line represents an average over $5 \times 10^3$ independent repeats. Results are shown for four different values of $kT$, equally spaced in logarithmic scale. Solid lines represent integrators for the untransformed potential with diffusion function $D(x; \alpha)$, while dotted lines represent integrators for the time-transformed potential with $D(x) = 1$.}
    \label{fig:transitionCounts}
\end{figure}

\begin{figure}
    \centering
    \includegraphics[scale=0.65]{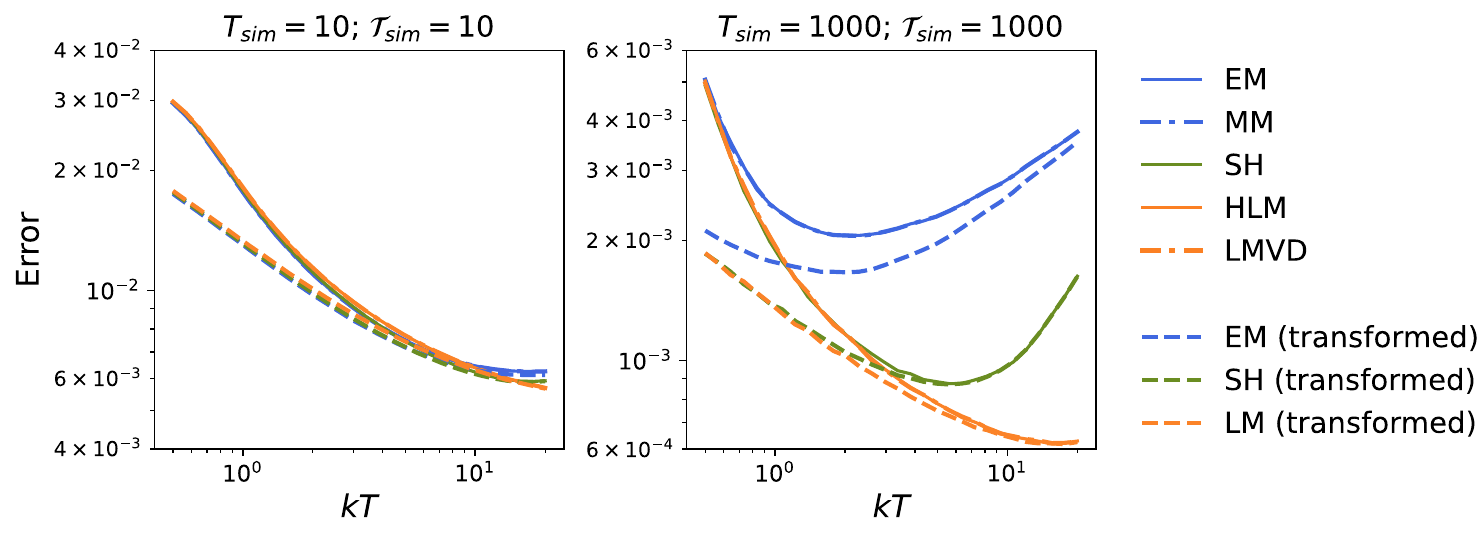}
    \caption{$L_1$ error in the invariant measure as a function of $kT$ for the different integrators. Solid lines correspond to integrators of the original dynamics with $\alpha = 0$ (constant, unit diffusion). Dotted lined correspond to integrators of the transformed dynamics using the $\alpha$ value that gives the minimum $L_1$ error at that temperature. The left panel shows the error for trajectories to final time $10$, where Monte Carlo error generally dominates. The right panel shows the error for trajectories to final time $1000$, where discretisation error generally dominates.}
    \label{fig:L1_error_compared}
\end{figure}

\noindent
Consider the double-well potential 
\begin{equation}
V(x) = - \frac{1}{4}h^4 x^2 + \frac{1}{2}c^2 x^4
\end{equation}
with $h=c=2$. This has a local maximum at $(x_{\text{max}}, V_{\text{max}})=(0,0)$, minima at $(x_{\pm}, V_{\text{min}})=(\pm 1, -2)$ and inflection points at $(\pm \frac{\sqrt{3}}{3},-\frac{11}{9})$. Using this potential, we define a family of configuration-dependent diffusion functions:
\begin{equation}
\label{eqn:ScaledDiffusion}
D(x; \alpha) = \begin{cases}
\exp\left(\alpha \cdot \frac{V(x)}{kT}\right) \quad \text{if } x_{-}< x < x_{+}\\
\exp\left(\alpha \cdot \frac{V_{\text{min}}}{kT}\right) \quad \; \text{otherwise},
\end{cases}
\end{equation}
where $0 \leq \alpha \leq 1$ controls the degree of configuration dependence, $\alpha = 0$ corresponds to constant noise. In Figure \ref{fig:doubleWellPlot}(a) we plot the potential and diffusion functions for $x \in [-2, 2]$ and $\alpha \in \{0.0, 0.2, 0.4, 0.6, 0.8, 1.0\}$ with $kT = 1$. Transforming \eqref{eqn:ScaledDiffusion} to constant diffusion using a time-rescaling leads to an effective potential
\begin{equation}
\label{eqn:timeRescaledPotential}
\hat{V}(x; \alpha) = \begin{cases}
    \left(1 - \alpha\right) V(x) \quad \quad \text{if } x_{-}< x < x_{+}\\
    V(x) + \alpha \vert V_{\text{min}} \vert \quad \text{otherwise}.
\end{cases}
\end{equation}

We plot this effective potential in Figure \ref{fig:doubleWellPlot}(b). As $\alpha$ increases, the rescaled potential flattens, gradually removing the metastability. This smooth removal of the metastability motivates the choice of diffusion considered in \eqref{eqn:ScaledDiffusion}.

In Figure \ref{fig:transitionCounts} we plot the $\alpha$ and $kT$ dependence of the mean transition counts, $N_T$, between the wells for various numerical integrators (for details of the integrators, see Section \ref{sec:numericalIntegrators}). Unsurprisingly, simulations of the time-rescaled dynamics with $\alpha > 0$ result in enhanced sampling between the two wells, due to the lower energetic barrier. Furthermore, this enhanced-sampling effect is strongest at lower temperatures, since diffusion across the barrier is a kinetically-inhibited rare event. 

For fixed step size, numerical integrator, and temperature, there exists an $\alpha$ value for which the sampling error of the transformed dynamics is minimum, bounded above by the sampling error with untransformed dynamics ($\alpha = 0$). In Figure \ref{fig:L1_error_compared} we plot this minimum sampling error as a function of $kT$ with step size $h = 0.01$ for short simulations ($T_{\text{sim}}, \mathcal{T}_{\text{sim}} =10$, Monte-Carlo error dominates the sampling error) and long simulations ($T_{\text{sim}}, \mathcal{T}_{\text{sim}} =1000$, discretisation error dominates). For all integrators, sampling efficiency can be improved by considering an $\alpha >0$ and applying a time rescaling to constant diffusion. Once more, we observe that the effect is most dramatic at low temperatures. See Appendix \ref{appx:doubleWellEnhancedSampling} for the full experimental methodology.

\end{example}

\section{Transforms for multivariate Brownian dynamics}
\label{sec:MultivariateTheory}
This section examines generalisations of the Lamperti and time-rescaling transforms to multivariate Brownian dynamics. Proofs of all results can be found in Appendix \ref{appdx:Proofs}.

\subsection{The multivariate Lamperti transform}
\label{sec:multivariateLamperti}

Consider multivariate Brownian dynamics with $\V{D}$ matrix
\[
\V{D}(\V{X})_{ij} = D_i(X_i)R_{ij},
\]
where $R_{ij}$ in an invertible, constant matrix. For this class of diffusion, although a Lamperti transform to unit diffusion can be constructed (Section \ref{lampertiTransformTheory}), the transformed dynamics are only Brownian dynamics if $\V{R}$ is proportional to the identity. Specifically, when $\V{D}(\V{X})_{ij} = D_i(X_i)\delta_{ij}$, the Lamperti-transformed process is $Y_{i,t} = \sqrt{2kT} \int_{x_0}^{X_{i,t}} \frac{1}{D_i(x)} dx \vcentcolon= \sqrt{2kT} \phi_i(X_{i,t})$, and obeys
\[
dY_{i,t} = -\nabla_{Y_i} \hat{V}({\V{Y}})dt + \sqrt{2kT}dW_i,
\]
with an effective potential (Theorem \ref{thm:MultivariateLamperti})
\begin{equation}
\label{eqn:LampertiMultivariateEffectivePotential}
    \hat{V}(\V{Y}) = V(\phi^{-1}(\V{Y})) - kT \sum_{k=1}^n \ln D_k (\phi^{-1}_k (Y_{k,t})).
\end{equation}
In this case, the ergodic theorem generalises to (Theorem \ref{thm:LampertiErgodic})
\begin{equation}
\label{eqn:NDLampertiErgodicTheorem}
\int_{\mathbb{R}^n} f(\V{X})\rho(\V{X}) d\V{X} = \lim_{T_{\text{sim}} \rightarrow \infty} \frac{1}{T_{\text{sim}}} \int_{t=0}^{T_{\text{sim}}} f(\phi^{-1}(\V{Y}_t)) dt,
\end{equation}
where the map $\phi^{-1}: \mathbb{R}^n \rightarrow \mathbb{R}$ is constructed by individually applying $\phi_i^{-1}$ to each component of its argument, $1 \leq i \leq n$. We observe that there is an independent contribution to the effective potential for every diagonal component of $\V{D}$.

\subsection{The multivariate time-rescaling transform}
\label{sec:multivariateTimeRescaling}

Consider multivariate Brownian dynamics with $\V{D}$ matrix
\[
\V{D}(\V{X}) = D(\V{X})\V{R},
\]
where $\V{R}$ is invertible. For this class of $\mathbf{D}$ matrix, a time-rescaling to Brownian dynamics unit diffusion can be constructed (Section \ref{timeRescalingTheory}). The time-rescaled process is given by $\V{Y}_\tau = \mathbf{R}^{-1}\V{X}_\tau$  where $\frac{dt}{d\tau} = g(\V{X}) \vcentcolon= 1/D^2(\V{X})$ and it obeys
\[
d\V{Y}_{\tau} = - \nabla_{\V{Y}}\hat{V}(\V{Y})dt + \sqrt{2kT}d\V{W},
\]
with an effective potential (Theorem \ref{thm:TimeRescalingMD})
\begin{equation}
\label{eqn:NDEffectivePotentialTimeRescaling}
\hat{V}(\V{Y}) = V(\V{RY})- 2kT \ln D(\V{RY}).
\end{equation}
The ergodic theorem generalises to (Theorem \ref{thm:TimeRescalingMDErgodic})
\begin{equation}
\label{eqn:NDtimeErgodicTheorem}
\int_{\mathbb{R}^n} f(\V{X}) \rho(\V{X}) d\V{X} = \lim_{\mathcal{T}_{\text{sim}} \rightarrow \infty} \frac{\int_{\tau=0}^{\mathcal{T}_{\text{sim}}} f(\V{R}\V{Y}_\tau) g(\V{R}\V{Y}_\tau) d\tau}{\int_{\tau=0}^{\mathcal{T}_{\text{sim}}} g(\V{R}\V{Y}_\tau) d\tau}.
\end{equation}

\begin{remark*}
The proof of \eqref{eqn:NDtimeErgodicTheorem} does not require the assumption of Brownian dynamics and therefore it holds more generally for SDEs of the form considered in Section \ref{timeRescalingTheory}.
\end{remark*}

\subsection{Combining multivariate transforms}
\label{sec:combiningTransforms}

The Lamperti and time-rescaling transforms can be combined. This allows transforming a wider class of diffusion processes to constant diffusion than is possible when applying either transform separately. Specifically, from the results of Sections \ref{sec:multivariateLamperti} and \ref{sec:multivariateTimeRescaling}, one expects that diffusion matrices of the form $\V{D}(\V{X}) = \V{D}^{(1)}(\V{X})\V{R}\V{D}^{(2)}(\V{X})$, where 
\begin{equation}
\V{D}^{(1)}(\V{X}) = \begin{bmatrix}
D(\V{X}) & & \\
& \ddots & \\
& & D(\V{X})
\end{bmatrix}, \quad 
\V{D}^{(2)}(\V{X}) = \begin{bmatrix}
D_{1}(X_1) & & \\
& \ddots & \\
& & D_{n}(X_n)
\end{bmatrix},
\label{eqn:Dforms}
\end{equation}
can be transformed to constant diffusion. However, in this case, the transformed process has a non-conservative drift force unless $\V{R}$ is proportional to the identity (Theorem \ref{thm:NoDRDDiffusion}). In particular, if $R_{ij} = \delta_{ij}$, then the process can be transformed to a constant-diffusion Brownian dynamics process $\V{Y}_{\tau}$ through a time rescaling followed by a Lamperti transform, i.e.
\[
\V{X}_t \xrightarrow{\frac{dt}{d\tau}=g(\V{X}) \vcentcolon= D(\V{X})^{-2}} \V{X}_{\tau} \xrightarrow{\V{Y}_\tau = \int^{Y_{i,\tau}} D_i(x)^{-1}dx} \V{Y}_\tau.
\]
The effective potential of the transformed process is (Theorem \ref{thm:DDDiffusion})
\[
\hat{V}(\V{Y}) = V(\V{Y}) - 2kT \ln{D(\phi^{-1}(\V{Y}))} - kT \sum_{i=1}^n \ln D_i(\phi^{-1}_i(\V{Y})),
\]
and the ergodic theorem is
\[
\int_{\mathbb{R}^n} f(\V{X})\rho(\V{X})d\V{X} = \lim_{\mathcal{T}_{\text{sim}} \rightarrow \infty} \frac{\int_{0}^{\mathcal{T}_{\text{sim}}} f(\phi^{-1}(\V{Y}_\tau))g(\phi^{-1}(\V{Y}_{\tau}))d\tau}{\int_{0}^{\mathcal{T}_{\text{sim}}} g(\phi^{-1}(\V{Y}_\tau))d\tau}.
\]

\section{Numerical experiments in one dimension}
\label{sec:NumericalExp1D}



\begin{figure}
    \centering
    \includegraphics{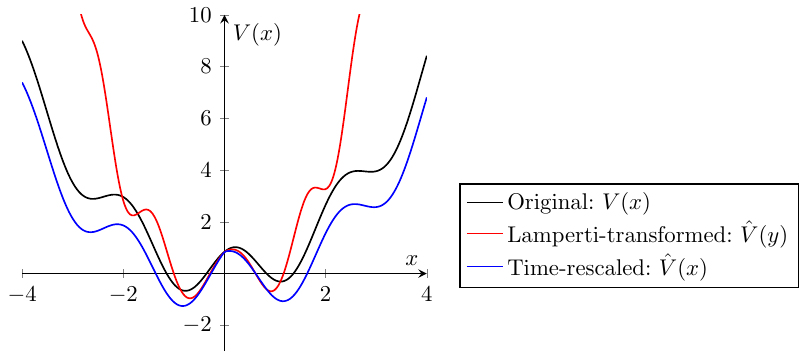}
    \caption{Comparison of effective potentials with original diffusion $D(x) = 1 + \vert x \vert$, $V(x) = \frac{x^2}{2} + \sin(1+3x)$ and $kT=1$. The original potential is in black, the Lamperti-transformed potential is in red and the time-rescaled potential is in blue. Metastability in the potential makes this a challenging sampling problem. We observe that the Lamperti transform stiffens the potential, while the time-rescaling softens it.}
    \label{fig:comparingLampertiandTime}
\end{figure}

 In this section, we simulate Brownian dynamics trajectories for a particular one-dimensional potential with variable noise. Specifically, we consider the potential $V(x)=\frac{x^2}{2} + \sin(1+3x)$ with diffusion profile $D(x) = 1+|x|$ and $kT=1$. Applying the results of Section \ref{sec:OneDimensionTransforms}, the Lamperti-transformed effective potential is
 \[
\hat{V}(y) = V\left(\frac{y}{4}(\vert y \vert +4)\right) - \frac{kT}{2} \ln \left\vert 1 + \vert y \vert + \frac{y^2}{4} \right\vert
\]
and the time-rescaled effective potential is
\[
\hat{V}(x) = V(x) - kT \ln{(1+\vert x \vert)}.
\]
\noindent
In Figure \ref{fig:comparingLampertiandTime}, we sketch $V(x)$, $\hat{V}(y)$ and $\hat{V}(x)$ for $kT=1$. 



For this setup, we consider various numerical integrators (introduced below) both with and without the application of transforms. We compare the weak convergence to the invariant distribution, the sampling efficiency, and the effect of transforms on estimates of the autocorrelation function and the evolving distribution. All experiments are run on a Thinkpad P17 with a 12-core, 2.60GHz Intel i7-10750H CPU, using code implemented in Julia 1.8.5\footnote{GitHub repository: \href{https://github.com/dominicp6/Transforms-For-Brownian-Dynamics}{https://github.com/dominicp6/Transforms-For-Brownian-Dynamics}}.





\subsection{Numerical integrators}
\label{sec:numericalIntegrators}


 

 

We consider the following numerical integrators: Euler-Maruyama (EM), Milstein Method (MM), Leimkuhler-Matthews (LM), Hummer-Leimkuhler-Matthews (HLM), Stochastic Heun (SH), and Limit Method with Variable Diffusion (LMVD). The defining equations of these integrators in the context of one-dimensional Brownian dynamics can be found in Appendix \ref{appdx:Integrators}. The integrators can be summarised as follows:

The Euler-Maruyama (EM) integrator extends the Euler method to SDEs. It has a strong convergence order of $1/2$ and a weak convergence order of $1$ \cite{hutzenthaler_strong_2011}. The Milstein Method (MM) modifies EM by incorporating a second-order correction derived from a stochastic Taylor series expansion. It is strong order $1$ and weak order $1$ 
 and reduces to EM for constant diffusion \cite{milshtejn_approximate_1975}. The Leimkuhler-Matthews (LM) integrator is derived from the high-friction limit of the BAOAB-splitting method in the constant diffusion regime \cite{leimkuhler_rational_2013}. It has weak convergence order $2$ for constant diffusion but is invalid (does not converge) for multiplicative noise. The Hummer-Leimkuhler-Matthews (HLM) integrator is an extension of LM that ensures that the expectation of position is exact in the case of locally linear diffusion, and is conjectured to improve convergence in the variable diffusion regime\footnote{We would like to thank Gerhard Hummer for suggesting this method in personal correspondence.}. It reduces to LM for constant diffusion. The Stochastic Heun (SH) integrator is a two-stage Runge-Kutta method. It has weak convergence order of $2$ for constant diffusion, otherwise order $1$ for variable diffusion \cite{blum_numerical_1972}. However, the accuracy gains of SH come at the cost of higher computational requirements, as it involves two force evaluations, two diffusion coefficient evaluations, and two diffusion gradient evaluations per iteration. The Limit Method with Variable Diffusion (LMVD) is a scheme that has a weak convergence order of $2$ for both constant and variable diffusion. It stems from the high-friction limit of the BAOAB-splitting method in the variable diffusion regime. Unlike SH, it requires one force evaluation per iteration, however, it requires two ODE solves per timestep, making it comparatively expensive compared to the related LM method. It is a novel integrator method that we introduce in this work. The derivation can be found in Appendix \ref{appdx:LMVD}. Note that LMVD reduces to LM for constant diffusion regime.

\subsection{Error in infinite time}
\label{sec:errorInInfiniteTime}

\begin{figure}
  \centering
  \begin{subfigure}{0.49\textwidth}
    \centering
    \includegraphics[width=\linewidth]{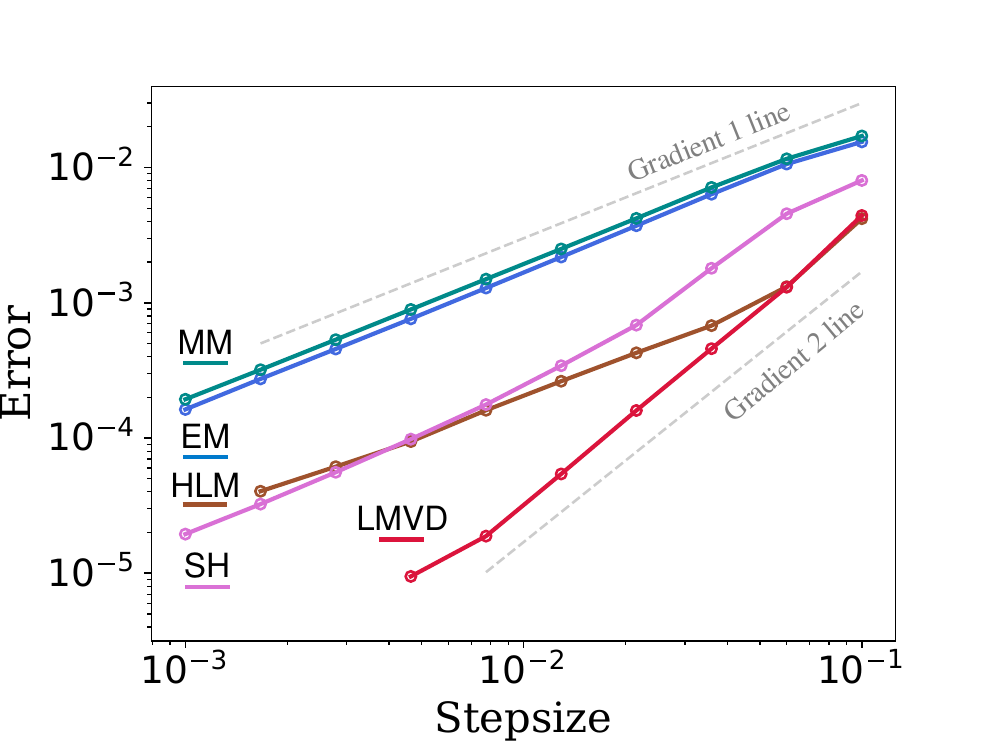}
    \caption{}
  \end{subfigure}
  \hfill
  \begin{subfigure}{0.49\textwidth}
    \centering
\includegraphics[width=\linewidth]{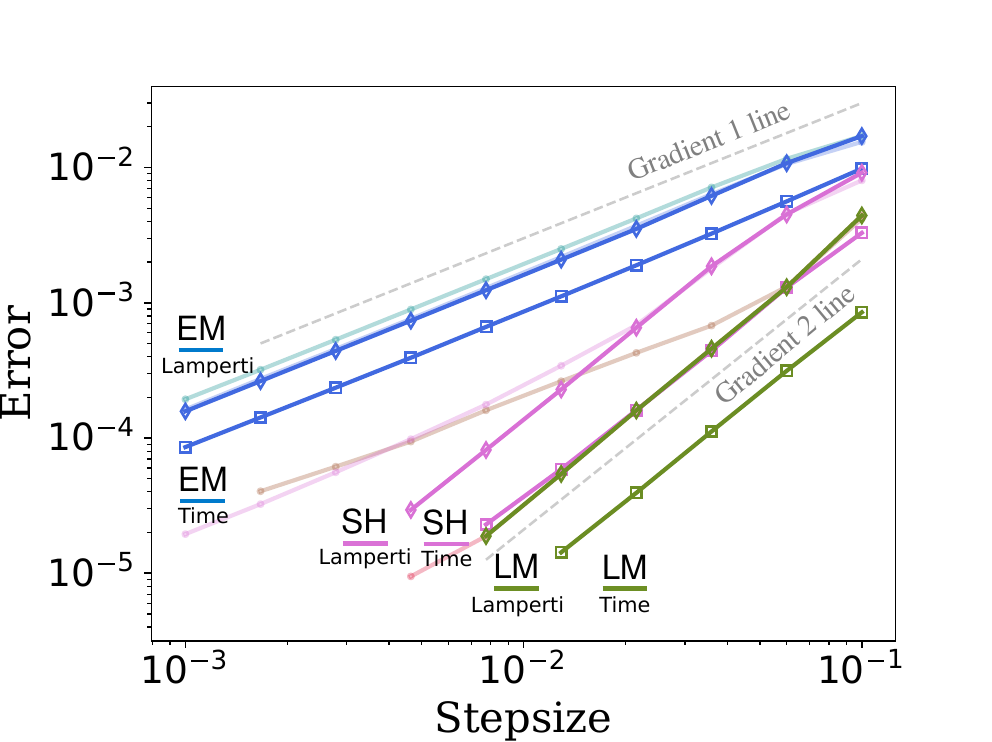}
    \caption{}
  \end{subfigure}
  \caption{Rates of convergence to the invariant measure. The simulation time was fixed at $T_{\text{sim}}, \mathcal{T}_{\text{sim}} = 7.5 \times 10^7$ and $12$ independent runs were averaged to further reduce sampling errors. (a) The untransformed methods. (b) When applying a transform to constant diffusion, either a Lamperti transform or a time rescaling. The untransformed methods are shown in faint in panel (b) to facilitate comparison. 
  }
  \label{fig:1DConvergence}
\end{figure}

We compare weak convergence to the invariant distribution $\rho(x) \propto \exp{(-V(x)/kT)}$ with varying step size $h$, using trajectories generated by the different integrators both with and without transforms to constant diffusion. For untransformed dynamics, we compare EM, MM, HLM, SH, and LMVD. For the Lamperti-transformed dynamics and the time-rescaled dynamics, we compare the EM, LM, and SH integrators. We omit MM since it reduces to EM for constant diffusion, while LMVD and HLM both reduce to LM for constant diffusion. For each method, we run trajectories of length $T_{\text{sim}}, \mathcal{T}_{\text{sim}} = 7.5 \times 10^7$, and 12 independent runs are averaged to reduce sampling errors.

To assess the convergence of the invariant distribution, we divide a subset of the $x$ domain into $M$ equal-length intervals and compute the mean error between the empirical probabilities and the exact probabilities given by the invariant distribution. For Lamperti-transformed experiments, we derive empirical probabilities using equation \eqref{reweightingLamperti}, for time-rescaled experiments we use equation \eqref{reweightingTimeRescaling}. We compute the L1 error:
\begin{equation}
\label{eqn:invariantDistributionError}
    \text{Error} \vcentcolon= \frac{1}{M} \sum_{i=1}^M \vert \omega_i - \hat{\omega}_i \vert, 
\end{equation}
where $\omega_i$ is the exact occupancy probability of the $i^{th}$ interval and $\hat{\omega}_i$ is the empirical estimate. We use $30$ equal-width intervals in the range $-5$ to $5$ and run each integrator using $10$ different step sizes, equally spaced in log space between $10^{-3}$ and $10^{-1}$. Steps are in $\tau$-time for time-rescaled methods, but $t$-time for all other methods. The error is plotted against the step size on a log-log scale, so first-order weak methods have a gradient of one, and second-order weak methods have a gradient of two. The results are shown in Figure \ref{fig:1DConvergence}. 


Figure \ref{fig:1DConvergence}(a) confirms the expected orders of weak convergence for the untransformed methods. Notably, MM has a larger error constant than EM, illustrating that improved strong convergence doesn't guarantee improved weak convergence. Examining Figure \ref{fig:1DConvergence}(b), we see that the effect of applying a transform is also integrator-dependent. Applying a transform to EM hardly changes the convergence properties, while applying a transform to constant diffusion for SH or LM restores their second-order convergence behavior, with the transformed LM method closely following the convergence properties of LMVD.  Note that the time-transformed methods are shifted relative to the Lamperti-transformed methods. This is expected because of the discrepancy between a constant step size in $\tau$-time and $t$-time. It does not automatically imply that time-rescaled integrators result in more efficient samplers than Lamperti-transformed integrators. We investigate the question of efficiency in Section \ref{sec:ComputationalEfficiency} below.

\begin{figure}
  \centering
  \begin{subfigure}{0.49\textwidth}
    \centering
    \includegraphics[width=\linewidth]{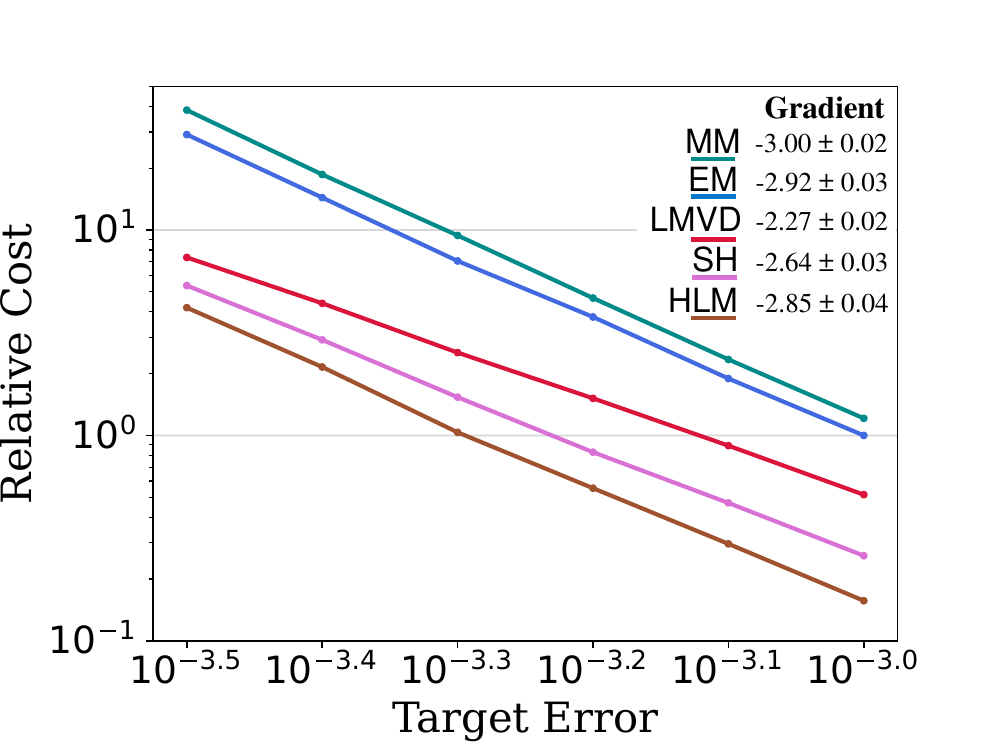}
    \caption{}
  \end{subfigure}
  \hfill
  \begin{subfigure}{0.49\textwidth}
    \centering
\includegraphics[width=\linewidth]{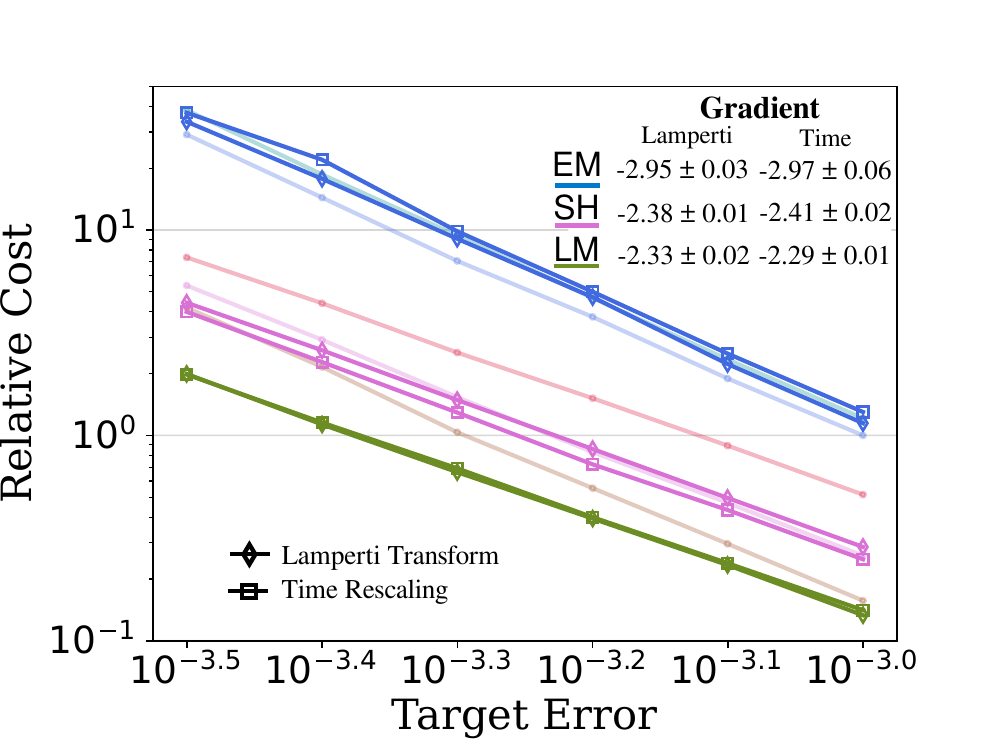}
    \caption{}
  \end{subfigure}
  \caption{Cost-error diagram to compare numerical efficiency. Error is defined as per Equation \eqref{eqn:invariantDistributionError}. The cost is the wall-clock time to reach the target error (number of iterations times cost per iteration), relative to the wall-clock time for untransformed EM to reach a target error of $10^{-3}$. (a) Untransformed methods. (b) When applying constant-diffusion transform: Lamperti or time rescaling. The untransformed methods are shown in faint for comparison.}
\label{fig:computationalEfficiency}
\end{figure} 

\subsection{Computational efficiency and numerical stability}
\label{sec:ComputationalEfficiency}

\begin{table}[h!]
\centering
\begin{tblr}{
  colspec = {X[c]*{6}{X[c]}},
  hline{2-3} = {0.4pt}, 
}
\hline[0.4pt] 
& \SetCell[c=2]{c} {\textbf{Untransformed}} && \SetCell[c=2]{c} {\textbf{Lamperti}} && \SetCell[c=2]{c} {\textbf{Time-rescaling}} \\
Integrator & $t \, (s)$ & $h^*$ & $t \, (s)$ & $h^*$ & $t \, (s)$ & $h^*$ \\
 EM & $10.83(8)$ & $0.20$ & $12.77(8)$ & $0.25$ & $13.18(9)$ & $0.25$ \\
SH & $14.47(13)$ & $0.25$ & $17.22(8)$ & $0.32$ & $15.56(14)$ & $2.5$\\
LM & - & - & $12.54(8)$ & $0.25$ & $13.09(4)$ & $2.5$ \\
MM & $11.97(18)$ & $0.16$ & - & - & - & -\\
HLM & $10.72(10)$ & $0.20$ & - & - &  - & -\\
LMVD & $48.20(12)$ & $0.25$ & - & - & - & -\\
\hline
\end{tblr}
\caption{Stability thresholds (\(h^*\)) and the compute time required for $10^8$ iterations (\(t\)) compared for untransformed and transformed methods. Standard errors in $t$ were computed by averaging 12 runs with a constant step size of 0.01. Stability thresholds were determined as the first step size (in geometric increments of \(10^{0.1}\)) that resulted in numerical blow-up. Errors are in bracket notation, e.g., \(10.83(8) = 10.83 \pm 0.08\).}
\label{table:computationalCost2}
\end{table}

We assess the computational efficiency of each method by comparing the wall-clock time required to achieve a fixed L1 error in the invariant measure, as defined by Equation \eqref{eqn:invariantDistributionError}. We estimate the wall-clock cost per iteration of each method by timing $10^8$ iterations with a fixed step size of $h=0.01$, averaging over 12 runs. For transformed methods, any additional cost of applying the counting formulas \eqref{reweightingLamperti} or \eqref{reweightingTimeRescaling} is included in these timings. We then fix a target error and run trajectories with various step sizes for each method, stopping when first reaching the target error. For each step size, we average 6,000 repeats and find the minimum number of iterations to reach the specified error. The total wall-clock time is then estimated as the minimum number of iterations over the various step sizes times the cost per iteration. This calculation is repeated for 5 target errors logarithmically spaced between $10^{-3.5}$ and $10^{-3}$. The resulting cost-error diagram is illustrated in Figure \ref{fig:computationalEfficiency}. 

Numerical stability is estimated by determining the smallest step size, in logarithmic increments of $10^{0.1}$, where numerical blow-up occurs before $T_{\text{sim}} , \mathcal{T}_{\text{sim}} =10^6$. These stability threshold estimates as well as timing results for $10^8$ iterations are shown in Table \ref{table:computationalCost2}. We see that time-rescaling significantly improves the stability threshold of SH and LM in this case whereas the Lamperti transform does not. This is explained by the fact that, for this diffusion coefficient, the time-rescaled potential is the softer of the two transformed potentials (Figure \ref{fig:comparingLampertiandTime}). Thus, for this problem,  the time-rescaling approach might be preferable if simulations with large step sizes are required.

As for the efficiency of the untransformed methods (Figure \ref{fig:computationalEfficiency}(a)), we note that the method with the best weak convergence (LMVD) isn't necessarily the most computationally efficient for a given range of target errors (HLM). Furthermore, Figure \ref{fig:computationalEfficiency}(b) shows how applying coordinate transforms can significantly improve the computational efficiency of certain integrators but have a more modest impact on others. For example, the Lamperti-transformed/time-rescaled LM is the most computationally efficient method overall - approximately 5 times more efficient than LMVD. However, applying transforms only slightly improves the efficiency of SH, and even slightly reduces the efficiency of EM. In general, both types of transform have very similar effects on efficiency, and any differences can be attributed to small differences in the iteration cost (Table \ref{table:computationalCost2}). Overall, we see that transformations only improve the efficiency of numerical integrators that have better convergence properties for constant noise. 

\subsection{Error in finite time}

\begin{figure}
    \centering
    \begin{subfigure}{0.48\textwidth}  
        \includegraphics[scale=0.475]{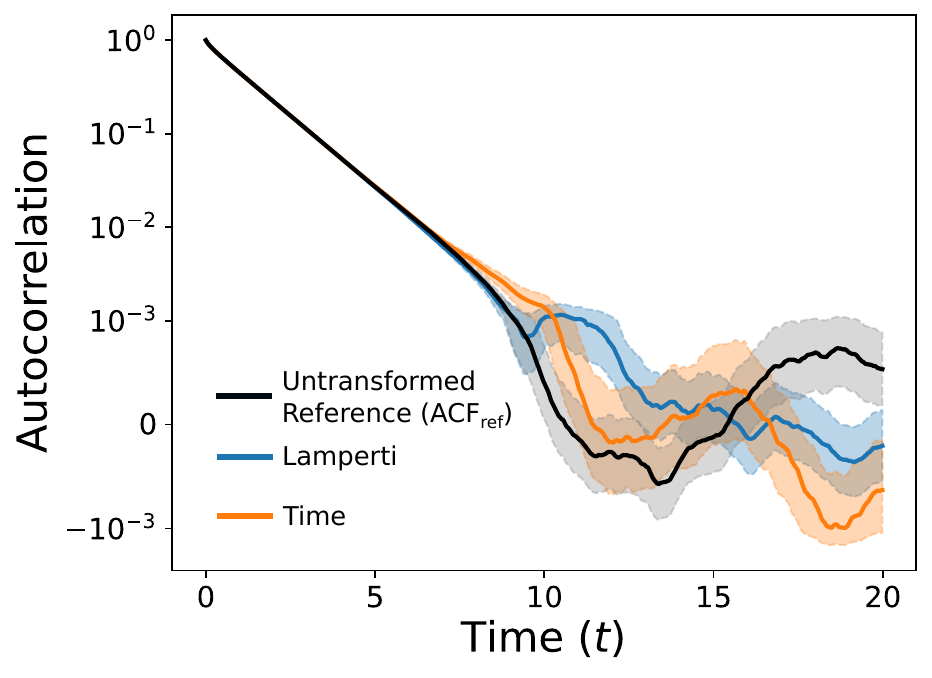}
        \caption{}
        \label{fig:subfigure1}
    \end{subfigure}
    \hfill  
    \begin{subfigure}{0.48\textwidth}  
        \includegraphics[scale=0.475]{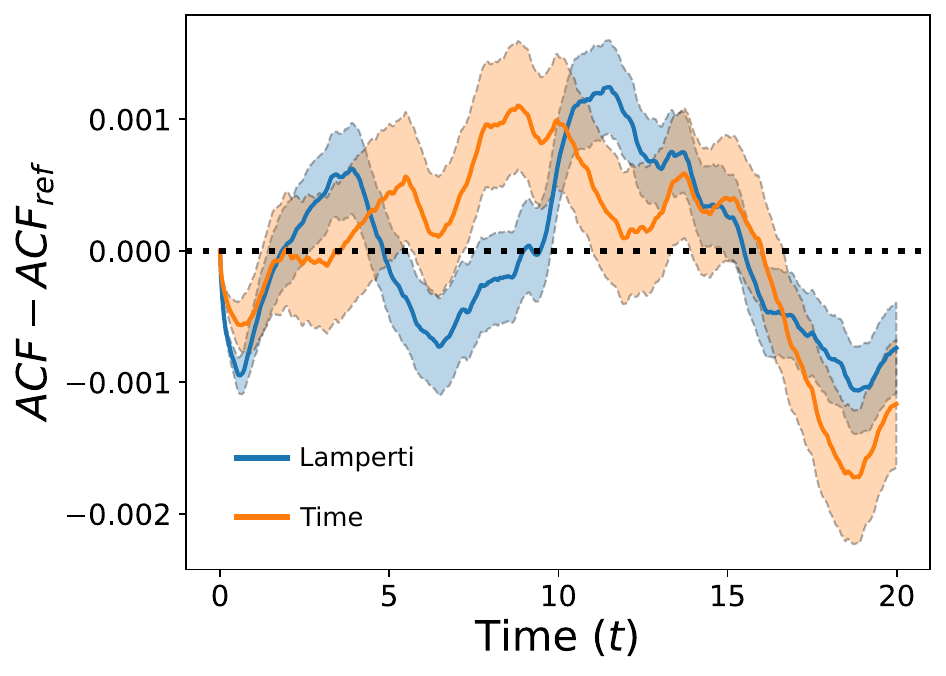}
        \caption{}
        \label{fig:subfigure2}
    \end{subfigure}
    \caption{Comparing normalised autocorrelation function estimates with and without transforms. Panel (a) shows the mean and standard deviation in the mean of autocorrelation function estimates obtained using the Stochastic Heun integrator with 200 trajectories of length $T_{\text{sim}}, \mathcal{T}_{\text{sim}}=5000$ and step size $h=0.01$. In black is the reference (no transform applied), in blue is when using a Lamperti transform and in orange is when using time rescaling. Panel (b) displays mean and standard error of the differences in autocorrelation function estimates using the Lamperti transform (blue) and time-rescaling transform (orange) compared to the reference ($ACF_{ref}$). Overall, biases introduced by the transforms are small and only practically significant at large times. }
    \label{fig:autocorrelation}
\end{figure}

\begin{figure}
  \centering
  \begin{subfigure}{0.49\textwidth}
    \centering
    \includegraphics[width=\linewidth]{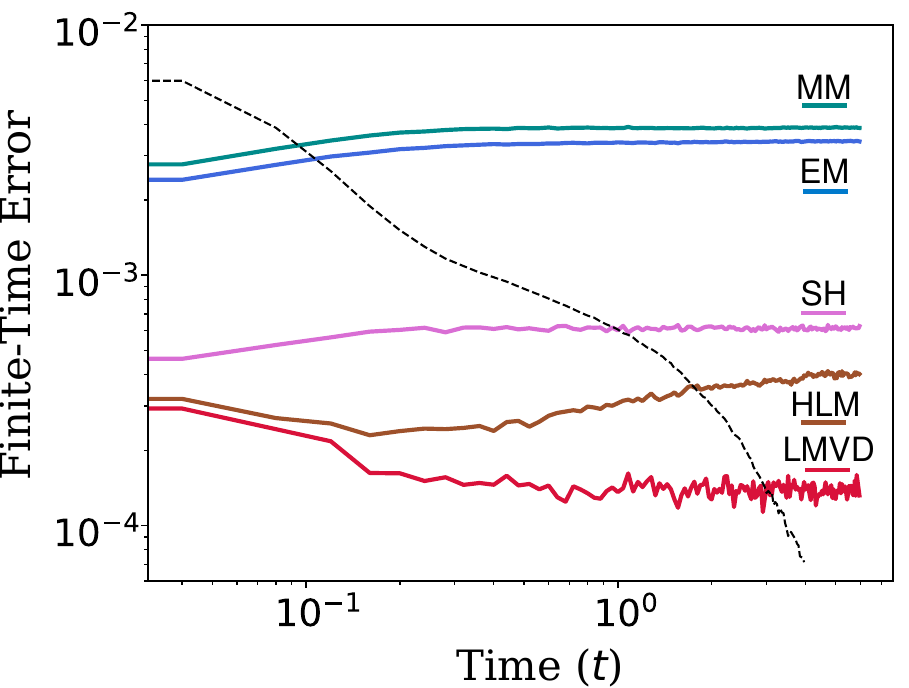}
    \caption{}
  \end{subfigure}
  \hfill
  \begin{subfigure}{0.49\textwidth}
    \centering
\includegraphics[width=\linewidth]{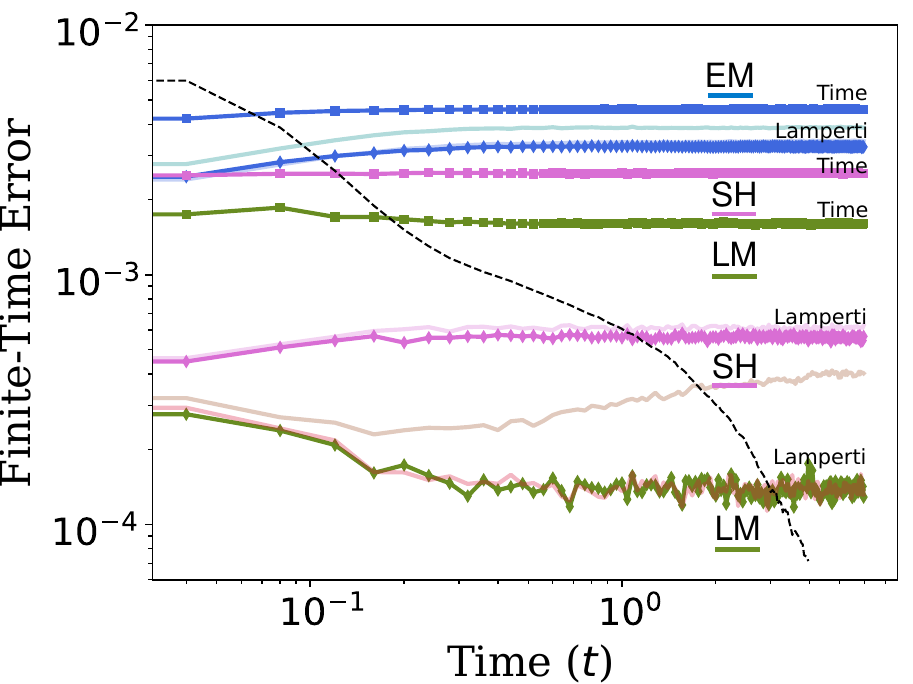}
    \caption{}
  \end{subfigure}
  \caption{Finite-time errors of the evolving distribution in the interval $t \in [0, 6]$ for fixed step size $h=0.02$. The reference distribution at time $t$ is computed by averaging over $2.5 \times 10^7$ independent trajectories using the SH method with small step size $h=10^{-4}$. In each plot, the dotted black line represents the L1 difference between the reference evolving distribution and the invariant distribution. (a) Errors of the untransformed methods. (b) Errors when applying a transform to constant diffusion, either a Lamperti transform or a time rescaling. The untranformed methods are shown in faint to facilitate comparison. Integrators are Leimkuhler-Matthews (LM), Milstein Method (MM), Euler-Maruyama (EM), Hummer-Leimkuhler-Matthews (HLM), Stochastic Heun (SH), Limit Method with Variable Diffusion (LMVD).}
   \label{fig:finiteTimeErrorEvolution}
\end{figure}


In the continuous limit, the Lamperti and time-rescaling transforms are invertible, allowing recovery of the original dynamics. However, the transforms of discredited dynamics could introduce additional bias for numerics. This section explores the effect of transforms on estimates of dynamic quantities, specifically the autocorrelation function and the evolving distribution. 

\subsubsection{Autocorrelation function}

To obtain a reference estimate of the autocorrelation function, we run $200$ randomised trajectories of length $T=5000$ using the Stochastic Heun integrator with step size of $h=0.01$, and with initial positions drawn from a standard normal distribution. For each trajectory, we use the Fast Fourier Transform (FFT) algorithm to estimate the normalised autocorrelation function and compute the mean and standard deviation of the best estimate. Additionally, we run trajectories under the same parameters but separately apply a Lamperti transform and time-rescaling transform. We then transform these trajectories back to $x$-space and $t$-time respectively and compute the normalised autocorrelation function. The three autocorrelation functions so obtained (reference, Lamperti and time-rescaling) are shown in Figure \ref{fig:autocorrelation}(a). We also calculate the mean and standard deviation in the mean of the difference between the Lamperti/time-rescaling autocorrelation functions and the reference estimate. These results are displayed in Figure \ref{fig:autocorrelation}(b).

Overall, at short times (approximately $t \lesssim 8$) the differences in autocorrelation estimates are generally minimal and often not statistically significant. As time increases (between $10$ and $20$), the fractional error in the mean becomes more significant, but still much smaller than the standard deviation across runs (which is $\sqrt{200} \approx 14$ times larger than the standard error in this case). 

Note that while inverting the Lamperti transform is straightforward (apply the inverse coordinate transformation), undoing the time-rescaling process can be more challenging. This is because direct conversion from $\tau$-time to $t$-time results in an irregular time series, making it unsuitable for direct application of FFT-based methods. To address this, we perform linear interpolation on a $t$ grid with the same regular spacing of $h=0.01$ before applying the FFT. This interpolation step introduces bias. However, as we have seen in Figure \ref{fig:autocorrelation}, the overall bias remains small and unlikely to be practically significant. Alternatively, methods designed for unevenly spaced time series, such as least-squares spectral analysis, could be used but these add significant computational cost, negating any sampling efficiency benefits of the time transform. 

\subsubsection{Evolving distribution}

With initial positions drawn from a standard normal distribution, we estimate the evolving distribution using $2.5 \times 10^7$ independent trajectories made with the SH integrator, step size $h=10^{-4}$ and compare this to the evolving distribution estimates computed with step size $h=0.02$ for each method.  We compute the L1 errors with respect to the reference distribution at time snapshot intervals of $\delta t = 0.04$, using the same histogram binning as introduced in Section \ref{sec:errorInInfiniteTime}. 

For methods involving the Lamperti transform, the initial condition is transformed to $y$-space, trajectories are evaluated and then transformed back to $x$-space. For methods involving time rescaling, trajectories are evaluated in $\tau$-time and then transformed back to $t$-time. However, the conversion from $\tau$-time to $t$-time is problematic since this transform depends on the unique history of each trajectory. To overcome this, we choose to simulate each trajectory in $\tau$-time (in steps of $h=0.02$) until slightly overshooting the $\delta t = 0.04$ interval. The position at the required $t$-time is then estimated by linear interpolation with the previous sample. This approach is inexpensive but can introduce bias.

The resulting errors for the untransformed and transformed methods are shown in Figure \ref{fig:finiteTimeErrorEvolution}(a) and \ref{fig:finiteTimeErrorEvolution}(b), respectively. Additionally, the figures include a dotted black line representing the L1 difference between the reference evolving distribution and the invariant distribution. For points below this line, the evolving distribution is distinguishable from the invariant distribution.

Examining the untransformed methods in Figure \ref{fig:finiteTimeErrorEvolution}(a), we observe that by $t=6$, the errors for each method have already converged to their corresponding infinite-time errors depicted in Figure \ref{fig:1DConvergence} (a), which is consistent with the correlation timescale implied in Figure \ref{fig:autocorrelation}. However, we see that MM, EM and LM are unsuitable at this step size if high accuracy is required, as their errors soon exceed the difference between the evolving and invariant distributions.

Examining the transformed methods, we see that the Lamperti transform has no detrimental impact on finite-time errors. In particular, the Lamperti-transformed LM method has the same finite-time error as the more expensive LMVD method. By contrast, the time-rescaled methods show a noticeable bias, likely originating from the need for linear interpolation when computing the evolving distribution at fixed $t$. This bias can make these methods ill-suited for high-accuracy simulations of the evolving distribution in practice. 


\section{Multivariate numerical experiments}
\label{sec:MultivariateExperiments}

\begin{figure}
    \centering
    \includegraphics[scale=0.4]{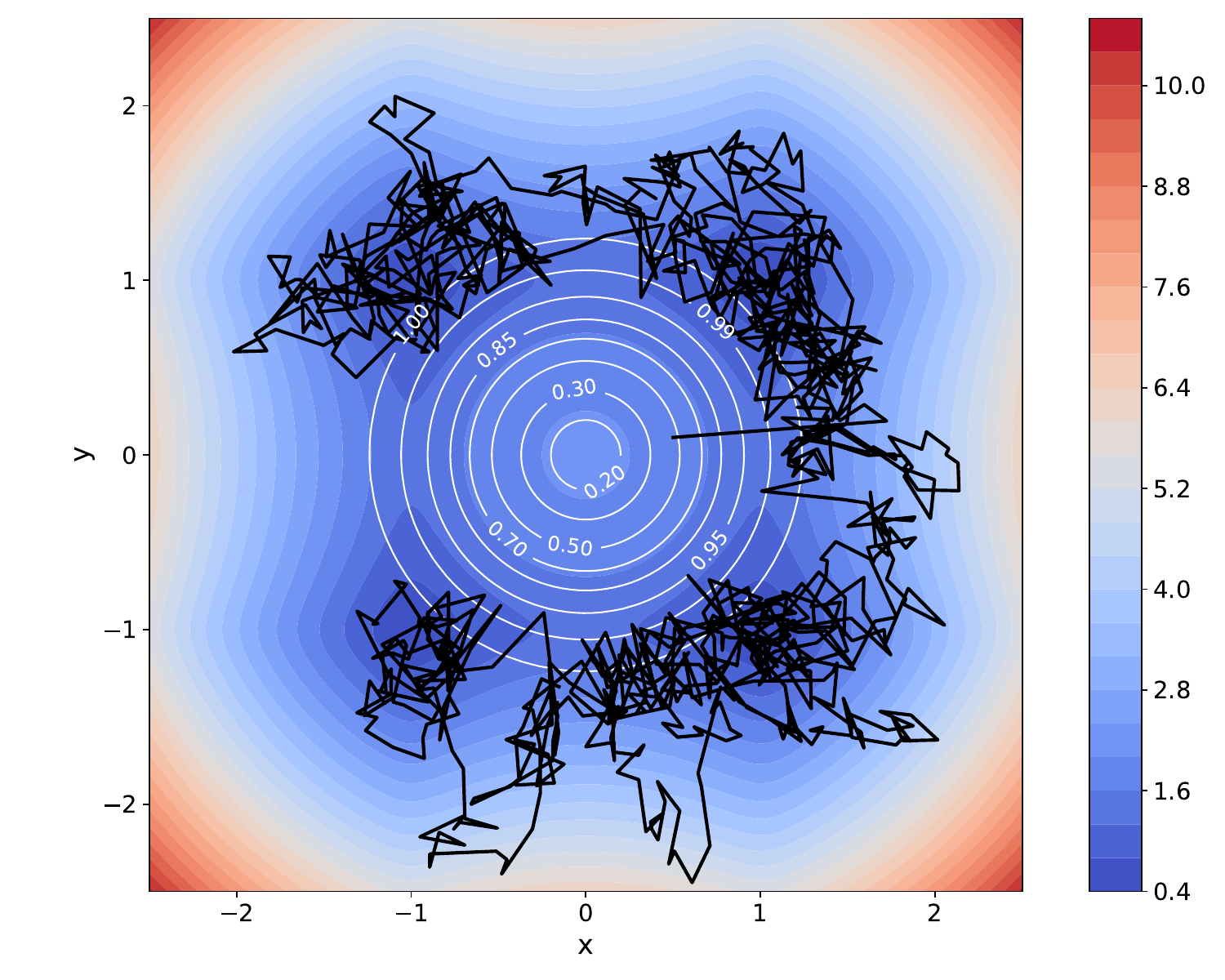}
    \caption{Heatmap of the quadruple-well potential function \eqref{quadrupleWellPotential}. White circles depict contours of the Frobenius norm of the Moro-Cadin diffusion tensor \eqref{moroCadinDiffusion}. The black path shows a Euler-Maruyama trajectory of Brownian dynamics of $1000$ steps with time step of $h=0.01$ and $kT=1$. Note the small norm of the diffusion tensor in the vicinity of the origin. This inhibits hopping between the wells, making this a more challenging sampling problem.}
    \label{fig:morocadin}
\end{figure}

\begin{figure}
  \centering
  \begin{subfigure}{0.49\textwidth}
    \centering
    \includegraphics[width=\linewidth]{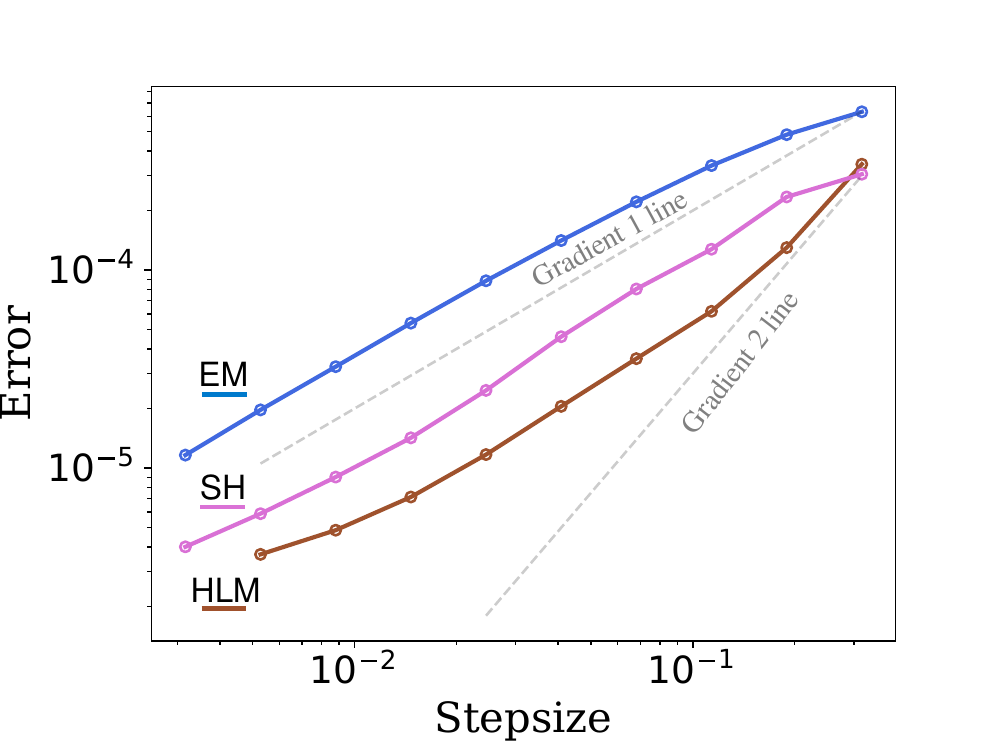}
    \caption{}
  \end{subfigure}
  \hfill
  \begin{subfigure}{0.49\textwidth}
    \centering
\includegraphics[width=\linewidth]{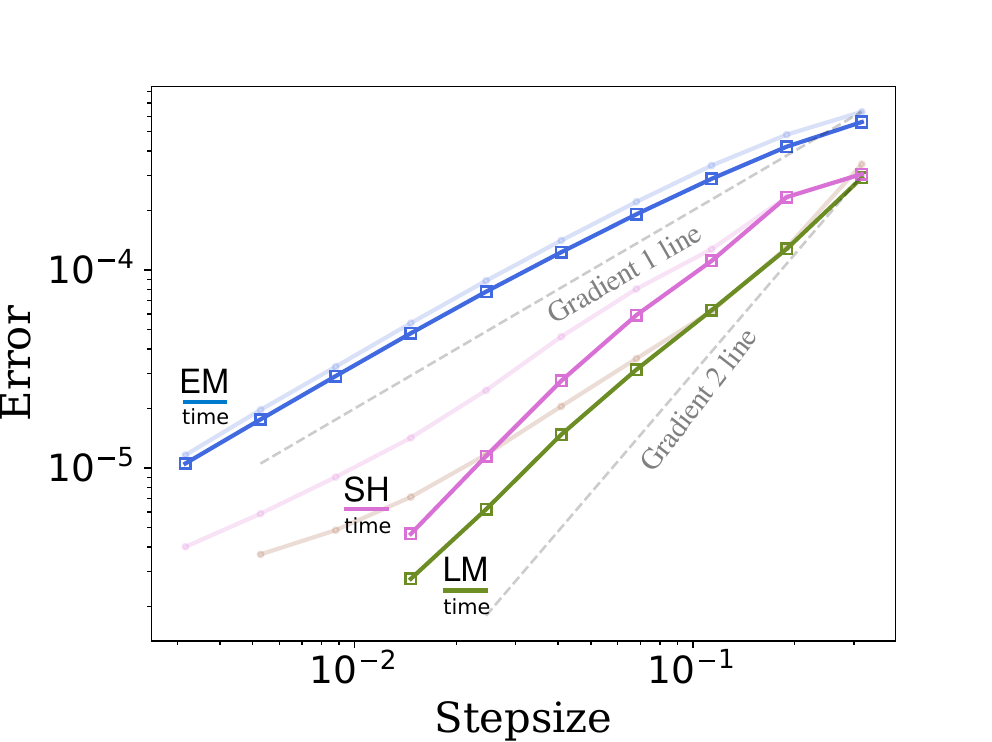}
    \caption{}
  \end{subfigure}
  \caption{Rates of convergence to the invariant measure for Brownian dynamics in a 2D quadruple well potential with Moro-Cadin diffusion tensor. The simulation time was fixed at $T_{\text{sim}},\mathcal{T}_{\text{sim}}=5 \times 10^6$ and $12$ independent runs were averaged to further reduce sampling errors. (a) The untransformed methods. (b) When applying a transform to constant diffusion through a time rescaling. The untransformed methods are shown in faint to facilitate comparison. Integrators are Leimkuhler-Matthews (LM), Euler-Maruyama (EM), Hummer-Leimkuhler-Matthews (HLM) and Stochastic Heun (SH).}
  \label{fig:2DConvergence}
\end{figure}

As an example of multivariate Brownian dynamics, we consider Stokes-Einstein diffusion, which models the diffusion of a low concentration of non-interacting, spherical particles suspended in a fluid. It has widespread applications, particularly in materials science \cite{catlow_chapter_2007, kelton_chapter_2010}, and is also used for modelling water diffusion in MRI imaging applications \cite{alexander_diffusion_2007}. In $n$ dimensions, the Stokes-Einstein diffusion tensor is given by
\[
\V{D}_{SE} = \frac{k_B T}{6 \pi \eta r} \V{1}_n,
\]
where $T$ is Kelvin temperature, $\eta$ is viscosity and $\V{1}_n$ is the $n$-dimensional identity matrix. 

If the temperature field or the fluid's material properties are non-homogeneous, then $\V{D}_{SE}$ is an isotropic, position-dependent matrix, and can be transformed to constant diffusion by time rescaling. Furthermore, if the medium is anisotropic, and certain diffusion directions are preferred over others, then the diffusion model can be generalised to $\V{D}(\V{X})=\V{D}_{SE}(\V{X})\V{D}^{(2)}(\V{X})$, where $\V{D}^{(2)}(\V{X})$ is the diagonal matrix given by equation \eqref{eqn:Dforms}. This kind of diffusion is common in biological tissues, for example in the brain, where positional alignment of white matter tracts results in preferred diffusion directions for water molecules \cite{me_anisotropy_1991}. Importantly, since this diffusion model is of the form $\V{D}(\V{X}) = \V{D}^{(1)}(\V{X})\V{D}^{(2)}(\V{X})$ in equation \eqref{eqn:Dforms}, the process can be transformed to constant-diffusion through the combination of a time-rescaling and a Lamperti transform. 

The example of Stokes-Einstein diffusion we consider is multivariate Brownian dynamics in a 2D quadruple-well potential given by 
\begin{equation}
\label{quadrupleWellPotential}
V(x,y) = \sqrt{\frac{17}{16} -2x^2 + x^4} + \sqrt{\frac{17}{16} - 2y^2 + y^4},
\end{equation}
with a diffusion tensor given by the Moro-Cardin tensor \cite{moro_saddle_1998}
\begin{equation}
\label{moroCadinDiffusion}
D(x, y) = \left(1 + A \exp\left(-\frac{x^2 + y^2}{2 \sigma^2}\right)\right)^{-1}\V{1},
\end{equation}
where $A=5$ and $\sigma=0.3$, see Figure \ref{fig:morocadin}. Since this diffusion tensor is isotropic, it can be mapped to constant diffusion through time rescaling. 

Figure \ref{fig:2DConvergence} illustrates the comparison of weak convergence to the invariant measure for the LM, EM, SH, and HLM integrators. Figure \ref{fig:2DConvergence}(a) is without any transforms, while Figure \ref{fig:2DConvergence}(b) is after a time-rescaling transform to constant diffusion has been applied. We follow the same general approach as first outlined in Section \ref{sec:errorInInfiniteTime}. We run trajectories of length $T_{\text{sim}}, \mathcal{T}_{\text{sim}} = 5 \times 10^6$ and average over $12$ independent runs and we run each integrator using $10$ different step sizes, equally spaced in log-space between $10^{-2.5}$ and $10^{-0.5}$. For histogram computation, we use a $30 \times 30$ grid of equal-width square bins covering the domain $[-3, 3] \times [-3, 3]$ in the $x$-$y$ plane.   

We observe similar behavior to the one-dimensional numerical experiments discussed in Section \ref{sec:errorInInfiniteTime}. It is noteworthy that applying a time-rescaling transform enhances the convergence rate for both the SH and LM integrators. Similar to the one-dimensional case, the transformed LM integrator exhibits a lower error constant compared to SH, indicating its superior efficiency for this particular problem. 

\section{Conclusions}
\label{sec:Conclusions}
We examined two types of transform to constant diffusion for Brownian dynamics with multiplicative noise: the Lamperti transform and the time-rescaling transform. We derived conditions on the noise term for these transforms to be applied and combined. Furthermore, through numerical experiments in one and two dimensions, we have shown how using these transforms, combined with an appropriate SDE integrator, can lead to a highly efficient sampling method for certain classes of multivariate noise. 

For one-dimensional Brownian dynamics, we showed that both transforms are always applicable, regardless of the form of the diffusion coefficient. However, the two transforms affect the dynamics differently, so the choice of transform may depend on the specific problem at hand. We showed numerically that applying either transform with the Leimkuhler-Matthews (LM) integrator significantly improves the convergence to the invariant measure, resulting in a method that has approximately five times higher sampling efficiency than the Limit Method with Variable Diffusion (LMVD) - a highly-performant integrator for multiplicative noise that does not utilise transformations. This transformed method also significantly outperformed the popular Euler-Maruyama integrator, with a 10 to 25 times higher computational efficiency for the problem investigated. Crucially, this method only requires one force and one diffusion tensor evaluation per iteration, thus scaling better to high-dimensional problems than competing methods that require multiple force and/or diffusion evaluations per step. 

In addition to investigating convergence to the invariant measure, we also verified whether dynamics information, in the form of the autocorrelation function and the evolving distribution, can be recovered after simulating a transformed process and then applying the inverse transform. We found that the Lamperti transform introduced no appreciable bias for estimates of either quantity, but that the time-rescaling transform is less suitable for recovering finite-time distributions.

For multivariate Brownian dynamics, the Lamperti and time-rescaling transforms have somewhat limited application. However, the two transformations can be combined to transform non-homogeneous, anisotropic Stokes-Einstein diffusion into a constant diffusion process. This is a broad class of diffusion tensors with applications in biological diffusion processes. Furthermore, in the context of rare event sampling, the flexibility of the coordinate transforms offers an additional advantage. In Section \ref{sec:EnhancedSampling} we showed that they can be used to design diffusion profiles that, when transformed back to constant diffusion, lead to enhanced sampling. In ongoing research, we are actively exploring these techniques in diverse applications and are investigating further refinements to optimise their effectiveness in enhancing sampling methodologies. We anticipate that coordinate transforms will improve the efficiency of Brownian dynamics simulations in various contexts.

\section{Acknowledgements}

This work was supported by the United Kingdom Research and Innovation (grant EP/S02431X/1), UKRI Centre for Doctoral Training in Biomedical AI at the University of Edinburgh, School of Informatics.

We extend our gratitude to the reviewers for their insightful comments and constructive suggestions, which greatly improved the quality and clarity of this manuscript.

\section{Data availability statement}
The software and data that support the findings of this study are openly available in Zenodo at https://zenodo.org/badge/latestdoi/616585156.

\newpage

\bibliographystyle{tfo}
\bibliography{bibliography}

\newpage

\appendix
\section{Double-well enhanced sampling}
\label{appx:doubleWellEnhancedSampling}

\noindent
\textbf{Transition counts in a double well}. Given a Brownian dynamics trajectory $X = (x_0, x_1, \dots, x_n)$, we define the transition counts $N_T$ to be the total number of times that the trajectory's state passes directly from the left inflection point of the double well to the right inflection point (or vice versa) through a sequence of intermediate states. 

\vspace{0.2cm}
\noindent
\textbf{$L_1$ error in the invariant measure}. We use an identical method to the method of computation of $L_1$ discussed in Section \ref{sec:errorInInfiniteTime}. We use $30$ equal-width intervals in the range $-5$ to $5$.

\vspace{0.2cm}
\noindent
\textbf{Methodology details for Figure} \ref{fig:L1_error_compared}. For each $kT$ value and integrator, experiments were run with $30$ values for $\alpha$ equally spaced between $0$ and $1$ inclusive. For each $\alpha$ value, $5 \times 10^5$ independent repeats were run for the simulations of length $T_{\text{sim}}, \mathcal{T}_{\text{sim}}=10$ and $5 \times 10^3$ repeats for the simulations of length $T_{\text{sim}}, \mathcal{T}_{\text{sim}}=1000$ and the mean $L_1$ error  was calculated. The minimum $L_1$ error across all considered $\alpha$ values is plotted in Figure \ref{fig:L1_error_compared}.

\section{Numerical Integrators}
\label{appdx:Integrators}

We use the shorthand notation
\[
    \begin{split}
        a(x) &\vcentcolon= -D(x)\frac{dV}{dx} + kT\frac{dD}{dx}, \\
        \sigma(x) &\vcentcolon= \sqrt{2kTD(x)}, \\
        \tilde{a}(x) &\vcentcolon= a(x) - \frac{1}{2}\sigma(x)\frac{d\sigma}{dx} = -D(x)\frac{dV}{dx} + \frac{1}{2}kT \frac{dD}{dx},
    \end{split}
\]
where $a(x)$ is the drift term, $\sigma(x)$ the diffusion term, and $\tilde{a}(x)$ is the Stratonovich-corrected drift \cite{pavliotis_stochastic_2014}. We consider the following integrators, where $w_n, w_{n+1} \overset{\mathrm{iid}}{\sim} \mathcal{N}(0,1)$ and $h$ is the step size:

\begin{itemize}
    \item[i)] Euler-Maruyama (EM)
\[
x_{n+1} = x_n + a(x_n)h + \sigma(x_n)\sqrt{h}w_n;
\]
\item[ii)] Milstein Method (MM)
\[
x_{n+1} = x_n + a(x_n)h + \sigma(x_n)\sqrt{h}w_n + \frac{1}{2}kT \frac{dD}{dx}(x_n)(w_n^2 - 1)h;
\]
\item[iii)] Leimkuhler-Matthews (LM)
\[
x_{n+1} = x_n + a(x_n)h + \sigma(x_n)\sqrt{h}\frac{w_n + w_{n+1}}{2};
\]
\item[iv)] Hummer-Leimkuhler-Matthews (HLM)
\[
\label{HLM}
x_{n+1} = x_n + \left(a(x_n) + \frac{1}{4}kT\frac{dD}{dx}(x_n)\right)h + \sigma(x_n)\sqrt{h}\frac{w_n + w_{n+1}}{2};
\]
\item[v)] Stochastic Heun (SH)
\[
    \label{SH}
    \begin{split}
        x^*_{n+1} &= x_n + \tilde{a}(x_n)h + \sigma(x_n)\sqrt{h}w_n \\
        x_{n+1} &= x_n + \frac{1}{2}\left(\tilde{a}(x_n) + \tilde{a}(x^*_{n+1})\right)h + \frac{1}{2}\left({\sigma}(x_n) + {\sigma}(x^*_{n+1})\right)\sqrt{h}w_n;
    \end{split}
\]
\item[vi)] Limit Method with Variable Diffusion (LMVD)
\begin{equation} \label{eqn:LMVD_defn}
    \begin{split}
        \hat{x}_{n+1} = \sqrt{kT}w_n - \sqrt{2h D(x_n)} \frac{dV}{dx}(x_n) + kT\sqrt{\frac{h}{2 D(x_n)}}\frac{dD}{dx}(x_n) \\
        \tilde{x}_{n+1} = \left\{x\left(\sqrt{\frac{h}{2}}\right) \bigg| \; x(0) = x_n, \; dx = \sqrt{D(x)}\hat{x}_{n+1}dt \right\} \\
        x_{n+1} = \left\{x\left(\sqrt{\frac{h}{2}}\right) \bigg| \; x(0)=\tilde{x}_{n+1}, \; dx=\sqrt{kT D(x)}w_{n+1}dt\right\}.
    \end{split}
\end{equation}


\end{itemize}
\noindent

\section{Derivation of the Limit Method with Variable Diffusion}
\label{appdx:LMVD}

Consider the dynamics originally proposed in \cite{leimkuhler_ensemble_2018}, Equation 6, 
\begin{equation}
\begin{split} \label{eqn:LMVD}
d\V{X}_t &= \V{B}(\V{X}_t) \V{P}_t dt,\\
d\V{P}_t &= -\V{B}(\V{X}_t)^T \nabla V(\V{X}_t) dt + kT \text{div}(\V{B}^T)(\V{X}_t) dt - \gamma \V{P}_t dt + \sqrt{2 \gamma kT} d\V{W}_t,
\end{split}
\end{equation}
where $\V{B}(\V{X})$ is a positive definite matrix, $\gamma>0$ is a friction parameter and $\V{P}\in\mathbb{R}^n$ denotes the instantaneous system momentum. It is straightforward to check that these dynamics preserve the canonical distribution 
\[
\rho(\V{X},\V{P}) \propto \exp\left(-V(\V{X})/kT-\|\V{P}\|^2/2kT\right),
\]
for any positive definite matrix $\V{B}(\V{X})$, where the marginal distribution of position satisfies
\[
\int \rho(\V{X},\V{P}) d\V{P} \propto \rho(\V{X}).
\]
We now consider discretizations of \eqref{eqn:LMVD} built via splitting the SDE into three pieces denoted A, B, and O:
\[
\begin{split}
d\left[\begin{array}{c}
\V{X}_t\\ \V{P}_t
\end{array}
\right] = 
\underbrace{\left[\begin{array}{c}
\V{B}(\V{X}_t) \V{P}_t\\ \V{0}
\end{array}
\right] dt}_\text{A}
+&
\underbrace{\left[\begin{array}{c}
\V{0}\\ -\V{B}(\V{X}_t)^T \nabla V(\V{X}_t) + kT \text{div}(\V{B}^T)(\V{X}_t)
\end{array}
\right] dt}_\text{B}\\
&+
\underbrace{\left[\begin{array}{c}
\V{0}\\ -\gamma \V{P}_t dt + \sqrt{2 \gamma kT} d\V{W}_t
\end{array}
\right]}_\text{O}.
\end{split}
\]
Note that when $\V{B}$ is a constant matrix \eqref{eqn:LMVD} reduces to conventional Langevin dynamics, and the above splitting matches the pieces given in \cite{leimkuhler_rational_2013}.

Taking any of the A, B or O pieces in isolation, we may solve the implied SDE exactly (in distribution) for time $t>0$. Denoting the solution to each piece as $\phi_t(\V{X},\V{P})$, given the initial conditions at $t=0$ are $(\V{X},\V{P})$, we can write 
\[
\begin{split}
    \phi_t^{\text{A}}(\V{X},\V{P}) &= (
    \left\{\V{Y}(t) \middle| \V{Y}(0)=\V{X}, d\V{Y} = \V{B}(\V{Y})\V{P}dt\right\},
    \V{P}),\\
    \phi_t^{\text{B}}(\V{X},\V{P}) &= (\V{X},\V{P}-t \V{B}(\V{X})^T \nabla V(\V{X}) + t kT \text{div}(\V{B}^T)(\V{X})),\\
    \phi_t^{\text{O}}(\V{X},\V{P}) &= (\V{X},e^{-\gamma t}\V{P} + \sqrt{kT} \sqrt{1-e^{-2\gamma t}} \V{R}),
\end{split}
\]
where $\V{R}\sim N(\V{0},\V{I})$ is a normal random vector. As $\phi^\text{A}$ has no explicit closed form, we write the update purely as the solution to the underlying ODE.

We now consider the overdamped limit $\gamma \to \infty$ with a time step $s>0$, using the discretization scheme 
\[
(\V{X}_{n+1},\V{P}_{n+1}) := \phi_{s/2}^\text{A} \circ \phi_{s}^\text{O} \circ \phi_{s/2}^\text{A} \circ \phi_{s}^\text{B}(\V{X}_n,\V{P}_n).
\]
Writing out the resulting steps, we obtain
\[
\begin{split}
    \hat{\V{P}}_n &= \V{P}_n-s \V{B}(\V{X}_n)^T {\nabla} V(\V{X}_n) + s kT \text{div}(\V{B}^T)(\V{X}_n)\\
    \hat{\V{X}}_n &= \left\{\V{Y}(s/2) \middle| \V{Y}(0)=\V{X}_n, d\V{Y} = \V{B}(\V{Y})\hat{\V{P}}_n \right\} \\ 
    \V{P}_{n+1} &= \sqrt{kT} \V{R}_{n+1}\\
    {\V{X}}_{n+1} &= \left\{\V{Y}(s/2) \middle| \V{Y}(0)=\hat{\V{X}}_n, d\V{Y} = \V{B}(\V{Y}) {\V{P}}_{n+1} \right\}
\end{split}
\]
which we may simplify by recognizing that $\V{P}_{n} \equiv \sqrt{kT} \V{R}_{n}$.

We recover the LMVD method given in \eqref{eqn:LMVD_defn} by considering the one-dimensional case where $B(x)=\sqrt{D(x)}$ and choosing $s=\sqrt{2h}$ for a time step of $h>0$ to ensure consistency between schemes \cite{lelievre_free_2010}. 

\section{Proofs}
\label{appdx:Proofs}

This section contains proofs of all results stated in Section \ref{sec:OneDimensionTransforms} and Section \ref{sec:MultivariateTheory}.

\begin{theorem}
    \label{thm:transformationalSymmetry}
    Applying a continuous coordinate transform to a one-dimensional Brownian dynamics process results in another Brownian dynamics process with potential and diffusion function given by \eqref{eqn:lamperti1DPotentialAndD}.
\end{theorem}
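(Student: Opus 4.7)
The plan is to apply Itô's lemma to the transformed process $y_t = y(x_t)$ and verify by direct computation that the resulting SDE has precisely the form of Brownian dynamics with the potential $\hat{V}$ and diffusion coefficient $\hat{D}$ given in \eqref{eqn:lamperti1DPotentialAndD}. Since the transform is continuous and invertible, $y'(x)$ has constant sign, so the absolute value inside the logarithm in $\hat{V}(y)$ behaves smoothly under differentiation (it contributes $y''/y'$, not a delta function), and the inverse map $y \mapsto x(y)$ is well-defined on the image.

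First I would write
\begin{equation*}
dy_t = y'(x_t)\, dx_t + \tfrac{1}{2} y''(x_t)\,(dx_t)^2,
\end{equation*}
substitute \eqref{1Dvariable2}, and use the quadratic variation $(dx_t)^2 = 2kT D(x_t)\, dt$. The noise coefficient that drops out is $y'(x_t)\sqrt{2kT D(x_t)}$; squaring it gives $2kT\hat{D}(y_t)$ with $\hat{D}(y)=D(x(y))(y'(x(y)))^2$, which already matches the claimed diffusion. The drift that drops out is
\begin{equation*}
y'(x_t)\bigl[-D(x_t)V'(x_t) + kT D'(x_t)\bigr] + kT D(x_t)\, y''(x_t).
\end{equation*}

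Next I would show this equals $-\hat{D}(y)\hat{V}'(y) + kT \hat{D}'(y)$. Computing via the chain rule gives $\hat{V}'(y) = V'(x)/y'(x) + kT\, y''(x)/(y'(x))^2$ and $\hat{D}'(y) = D'(x) y'(x) + 2 D(x) y''(x)$. Multiplying $\hat{V}'$ by $-\hat{D}(y) = -(y'(x))^2 D(x)$ and adding $kT\hat{D}'(y)$ reproduces the drift expression above after cancellation of the $kT D(x) y''(x)$ terms. The computation is entirely routine; the only bookkeeping subtlety is carefully tracking the factors of $dx/dy = 1/y'(x)$ introduced by differentiating composite functions of $x(y)$. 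Since no hard analytic step arises, I do not expect a real obstacle — the argument is essentially a verification that the drift coefficient $-D V' + kT D'$ has the right covariance structure under Itô coordinate changes to remain in the Brownian-dynamics form, a reflection of the fact that Brownian dynamics is the gradient flow (in a $D$-weighted sense) of the canonical measure.
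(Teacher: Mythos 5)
Your proposal is correct and follows essentially the same route as the paper's proof: apply It\^o's lemma to $y_t = y(x_t)$ and verify that the resulting drift equals $-\hat{D}\hat{V}' + kT\hat{D}'$ with $\hat V,\hat D$ as in \eqref{eqn:lamperti1DPotentialAndD}. The only cosmetic difference is the direction of the algebra (you differentiate $\hat V,\hat D$ and check the identity, while the paper substitutes $V,D$ in terms of $\hat V,\hat D$ and simplifies forward), and your chain-rule expressions and the final cancellation are all correct.
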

\begin{proof}
Applying It\^o's Lemma with $y=y(x)$, where $x$ obeys \eqref{1Dvariable2}, we have,
\[
\begin{split}
    dy = \left(-D(x(y))\frac{dV}{dx}(x(y)) + kT \frac{dD}{dx}(x(y))\right)\frac{dy}{dx}(x(y))dt \\ + kT D(x(y)) \frac{d^2 y}{dx^2}(x(y))dt  + \sqrt{2 kT D(x(y))}\frac{dy}{dx}(x(y)) dW_t. 
\end{split}
\]
Now substituting in the transformations \eqref{eqn:lamperti1DPotentialAndD},
\[
    \begin{split}
        =& -\hat{D}\frac{d}{dx} \left( \hat{V}dt - kT\ln \left \vert \frac{dy}{dx} \right \vert \right) \left(\frac{dy}{dx}\right)^{-1}dt + kT \frac{d}{dx}\left(\hat{D}\frac{dy}{dx}^{-2} \right)\frac{dy}{dx}dt \\ & + kT \hat{D}\frac{dy}{dx}^{-2} \frac{d^2 y}{dx^2} dt + \sqrt{2 kT \hat{D}} dW_t \\ 
        =& -\hat{D}\frac{d\hat{V}}{dy}dt + kT\hat{D}\frac{d^2 y}{dx^2} \frac{dy}{dx}^{-2}dt + kT \frac{d\hat{D}}{dy}dt -2kT \hat{D}\frac{d^2y}{dx^2}\frac{dy}{dx}^{-2} + kT \hat{D}\frac{dy}{dx}^{-2} \frac{d^2 y}{dx^2} dt \\ & + \sqrt{2 kT \hat{D}} dW_t  \\
        =& -\hat{D}(y) \frac{d\hat{V}}{dy}dt + kT \frac{d\hat{D}}{dy}dt + \sqrt{2 kT \hat{D}(y)} dW_t,
    \end{split}
\]
which is Brownian motion with potential $\hat{V}(y)$ and diffusion function $\hat{D}(y)$.
\end{proof}

\noindent
\begin{theorem}
    \label{thm:LampertiPhaseSpace1D}
    In one dimension, the Lamperti-transformed process can be used to compute phase space averages through \eqref{eqn:1DLampertiErgodicTheorem}.
\end{theorem}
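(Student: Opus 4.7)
The plan is to apply the standard ergodic theorem \eqref{eqn:theErgodicTheorem} to the Lamperti-transformed process $y_t$ and then push the resulting phase-space integral from $y$-space back to $x$-space by the change of variables induced by $x=x(y)$. Since the transformed dynamics \eqref{eqn:1DLampertiTransform} is one-dimensional Brownian dynamics with potential $\hat{V}(y)$ and constant diffusion, and since $\hat{V}$ inherits the confining behaviour of $V$ (the logarithmic correction is dominated at infinity by any reasonable growth of $V$), the transformed process is ergodic with invariant density $\hat{\rho}(y)\propto \exp(-\hat{V}(y)/kT)$.

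First I would apply the ergodic theorem to the observable $g(y):=f(x(y))$ along trajectories of $y_t$, obtaining
\[
\int_{-\infty}^{\infty} f(x(y))\,\hat{\rho}(y)\,dy \;=\; \lim_{T_{\text{sim}}\to\infty}\frac{1}{T_{\text{sim}}}\int_0^{T_{\text{sim}}} f(x(y_t))\,dt,
\]
which already matches the right-hand side of \eqref{eqn:1DLampertiErgodicTheorem}. It remains to identify the left-hand side with $\int f(x)\rho(x)\,dx$. I would change variables $y\mapsto x=x(y)$, using $dy = \frac{dy}{dx}(x)\,dx = D(x)^{-1/2}\,dx$ from \eqref{eqn:Lamperti1DIntegral}, together with the explicit form $\hat{V}(y(x)) = V(x) - \tfrac{kT}{2}\ln D(x)$. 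This yields
\[
\hat{\rho}(y)\,dy \;\propto\; \exp\!\left(-\frac{V(x)}{kT} + \tfrac{1}{2}\ln D(x)\right) D(x)^{-1/2}\,dx \;\propto\; \exp(-V(x)/kT)\,dx,
\]
so the $D(x)$ factors cancel exactly and $\hat{\rho}(y)\,dy$ transports to $\rho(x)\,dx$ up to a normalising constant.

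The final step is to check that the normalising constants agree. Both $\rho$ and $\hat{\rho}$ are probability densities, and the change of variables preserves total mass since $x(y)$ is a diffeomorphism (its derivative $\sqrt{D(x)}$ is strictly positive by assumption on $D$). Hence the proportionality above is in fact an equality, and substituting into the ergodic identity for $y_t$ gives \eqref{eqn:1DLampertiErgodicTheorem}.

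The only subtle point I expect to encounter is justifying that $\hat{V}$ is confining enough to guarantee ergodicity of $y_t$; this follows because $V$ is assumed confining and $\ln D$ grows at most logarithmically in any regime where the Lamperti map is well-defined on all of $\mathbb{R}$. Once ergodicity of the transformed process is in hand, the rest is the short computation above.
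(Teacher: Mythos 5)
Your proposal is correct and follows essentially the same route as the paper's proof: apply the ergodic theorem to the transformed process, change variables back to $x$, observe that the $\sqrt{D(x)}$ factor coming from the effective potential cancels the Jacobian $\frac{dy}{dx}=D(x)^{-1/2}$, and verify that the partition functions coincide (the paper checks $Z=\hat{Z}$ by an explicit change of variables, which is the same mass-preservation argument you give). Your added remark on why $\hat{V}$ remains confining is a reasonable supplement; the paper simply assumes ergodicity of the transformed process.
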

\begin{proof}
    We assume that the transformed process $y_t$ is ergodic and therefore satisfies
\begin{equation}
\label{ergodicLampertiProcess}
\lim_{T_{\text{sim}} \rightarrow \infty} \frac{1}{T_{\text{sim}}} \int_{t=0}^{T_{\text{sim}}} f(y_t) dt = \int_{-\infty}^{\infty} f(y) \hat{\rho}(y) dy,
\end{equation}
where $\hat{\rho}(y) = \frac{1}{\hat{Z}} \exp{\left(-\frac{-\hat{V}(y)}{kT}\right)}$ is the invariant distribution of the transformed process. Substituting in the effective potential from \eqref{eqn:lamperti1DPotentialAndD}, the right-hand side of \eqref{ergodicLampertiProcess} becomes
\[
\int_{-\infty}^{\infty} f(y) \frac{\exp(-\frac{V(x(y))}{kT})}{\hat{Z}} \sqrt{D(x(y))} dy = \int_{-\infty}^{\infty} f(y(x)) \frac{\exp(-\frac{V(x)}{kT})}{\hat{Z}} \sqrt{D(x)} \frac{dy}{dx} dx.
\]
Using the fact $\frac{dy}{dx}=\frac{1}{\sqrt{D(x)}}$, this equation simplifies to 
\begin{equation}
\label{simplifiedSubstituted}
\int_{-\infty}^{\infty} f(y(x)) \frac{\exp(-\frac{V(x)}{kT})}{\hat{Z}}dx = \frac{Z}{\hat{Z}}\int_{- \infty}^{\infty} f(y(x)) \rho(x) dx
\end{equation}
where $Z = \int_{-\infty}^{\infty} \exp\left(-\frac{V(x)}{kT}\right)dx$ and $\hat{Z} = \int_{-\infty}^{\infty} \exp\left(-\frac{\hat{V}(y)}{kT}\right)dy$ 
are the partition functions of the original and transformed processes respectively. 

But $Z=\hat{Z}$ since, by change of variables:
\[
    \begin{split}
        Z &= \int_{-\infty}^{\infty} \exp\left(-\frac{V(x)}{kT}\right) dx = \int_{-\infty}^{\infty} \exp\left(-\frac{V(x(y))}{kT}\right) \left(\frac{dy}{dx}(x(y))\right)^{-1} dy \\
        &= \int_{-\infty}^{\infty} \exp\left(-\frac{V(x(y))+kT \ln \vert \frac{dy}{dx}(x(y)) \vert}{kT}\right) dy = \int_{-\infty}^{\infty} \exp\left(-\frac{\hat{V}(y)}{kT}\right) dy \\ &= \hat{Z}.
    \end{split}
\]

\noindent
Hence, from \eqref{ergodicLampertiProcess} and \eqref{simplifiedSubstituted} we have
\[
\int_{-\infty}^{\infty} f(y(x))\rho(x)dx =  \lim_{T_{\text{sim}} \rightarrow \infty} \frac{1}{T_{\text{sim}}}\int_{t=0}^{T_{\text{sim}}} f(y_t) dt.
\]
Finally, if we redefine $f$ as $f \circ x$, then we obtain:
\begin{equation}
\label{eqn:1DLampertiErgodicTheoremAppx}
\int_{-\infty}^{\infty} f(x)\rho(x)dx =  \lim_{T_{\text{sim}} \rightarrow \infty} \frac{1}{T_{\text{sim}}} \int_{t=0}^{T_{\text{sim}}} f(x(y_t)) dt,
\end{equation}
as required.
\end{proof}

\begin{theorem}
    \label{thm:InvariantLamperti}
    In one dimension, the Lamperti-transformed invariant measure $\hat{\rho}(y)$ and the original invariant measure $\rho(x)$ are related by $\rho(x) = \hat{\rho}(x(y))\frac{dy}{dx}$.
\end{theorem}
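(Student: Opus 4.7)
The plan is to exploit the ergodic theorem \eqref{eqn:1DLampertiErgodicTheorem} just established in Theorem \ref{thm:LampertiPhaseSpace1D}, specialising the test function $f$ to an indicator of an arbitrary interval $[a,b]$, exactly as the paper suggests just before the reweighting formula \eqref{reweightingLamperti}. Since the identity must hold for every $[a,b]$, comparing the resulting integrands will force the pointwise relation $\rho(x) = \hat\rho(y(x))\,\frac{dy}{dx}$.

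First I would take $f(x) = I(x \in [a,b])$ in \eqref{eqn:1DLampertiErgodicTheorem}, obtaining
\[
\int_a^b \rho(x)\,dx \;=\; \lim_{T_{\text{sim}} \to \infty} \frac{1}{T_{\text{sim}}} \int_{0}^{T_{\text{sim}}} I\bigl(x(y_t) \in [a,b]\bigr)\,dt.
\]
Next I would note that the one-dimensional Lamperti transform is strictly monotonically increasing, because $\frac{dy}{dx}(x) = 1/\sqrt{D(x)} > 0$ by positivity of the diffusion. Consequently $x(y_t) \in [a,b]$ if and only if $y_t \in [y(a), y(b)]$, so the indicator may be rewritten as $I(y_t \in [y(a), y(b)])$.

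Then I would invoke the ergodic theorem for the transformed process $y_t$ (whose invariant density is $\hat\rho$) to turn the time average into the phase-space integral
\[
\int_{y(a)}^{y(b)} \hat\rho(y)\,dy,
\]
and finally change variables back to $x$ using $y = y(x)$, $dy = \frac{dy}{dx}\,dx$, yielding $\int_a^b \hat\rho(y(x))\,\frac{dy}{dx}\,dx$. Equating the two expressions gives $\int_a^b \rho(x)\,dx = \int_a^b \hat\rho(y(x))\,\frac{dy}{dx}\,dx$ for all $a<b$, and since both integrands are continuous the pointwise identity follows, matching the statement (up to the cosmetic identification of $\hat\rho(x(y))$ with $\hat\rho(y(x))$).

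There is essentially no hard step: monotonicity is immediate from $D>0$, the change-of-variables is elementary, and the ergodic theorem for $y_t$ was assumed in proving Theorem \ref{thm:LampertiPhaseSpace1D}. The only point requiring mild care is the sign of the Jacobian, which is automatically positive here so no absolute value is needed; if one wanted to allow a decreasing Lamperti transform (e.g.\ a different sign convention in \eqref{eqn:Lamperti1DIntegral}) one would carry $|dy/dx|$ throughout and swap endpoints accordingly, but the conclusion would be unchanged.
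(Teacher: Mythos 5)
Your proposal is correct and follows essentially the same route as the paper's own proof: specialise the ergodic identity \eqref{eqn:1DLampertiErgodicTheorem} to the indicator of $[a,b]$, rewrite the indicator in terms of $y_t$ using monotonicity of the Lamperti map, apply the ergodic theorem for $y_t$, change variables back to $x$, and conclude from the arbitrariness of $a,b$. Your explicit remark that $dy/dx = 1/\sqrt{D(x)} > 0$ justifies the interval rewriting is a small point the paper leaves implicit, but otherwise the arguments coincide.
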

\begin{proof}
  Set $f(x) = I(x \in [a, b])$, the indicator function on the interval $[a, b]$, in \eqref{eqn:1DLampertiErgodicTheoremAppx}. This gives
\begin{equation}
    \label{invariantDistributionTransform}
    \begin{split}
        \int_{-\infty}^{\infty} f(x)\rho(x)dx &= \int_a^b \rho(x) dx  = \lim_{T \rightarrow \infty} \frac{1}{T_{\text{sim}}}\int_{t=0}^{T_{\text{sim}}} I(x(y_t) \in [a,b]) dt \\ &= \lim_{T_{\text{sim}} \rightarrow \infty} \frac{1}{T_{\text{sim}}}\int_{t=0}^{T_{\text{sim}}} I(y_t \in [y(a),y(b)]) dt = \int_{y(a)}^{y(b)} \hat{\rho}(y) dy \\ &= \int_{a}^{b} \hat{\rho}(x(y)) \frac{dy}{dx}dx,
    \end{split}
\end{equation}
\noindent
where in the second line we re-expressed the indicator function in terms of $y_t$ and then applied the ergodic theorem for the $y_t$ process. \eqref{invariantDistributionTransform} implies
\[
\int_{a}^b \left(\rho(x) - \hat{\rho}(x(y))\frac{dy}{dx}\right) dx = 0
\]
which, from the arbitrariness of the constants $a$ and $b$, proves the result.
\end{proof}

\begin{theorem}
    \label{thm:TimeRescaling1D}
    Applying a time-rescaling to a one-dimensional Brownian dynamics process results in another Brownian dynamics process with potential and diffusion given by \eqref{eqn:effectivePotentialTimeRescaling}.
\end{theorem}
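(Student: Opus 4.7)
The plan is to directly substitute the time-rescaling into the one-dimensional Brownian dynamics SDE \eqref{1Dvariable2} and match the resulting coefficients to the canonical Brownian-dynamics form. The substitution has two ingredients: $dt = g(x_\tau)\,d\tau$ applied to the drift, and the scaling property of Brownian motion, which implies that the Wiener increment transforms as $dW_t = \sqrt{g(x_\tau)}\,dW_\tau$ in distribution. Applying these to \eqref{1Dvariable2} immediately yields an SDE of the form
\[
dx_\tau = \Bigl(-D(x_\tau)V'(x_\tau) + kT\,D'(x_\tau)\Bigr)g(x_\tau)\,d\tau + \sqrt{2kT\,D(x_\tau)g(x_\tau)}\,dW_\tau.
\]

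Next, I would read off the effective diffusion coefficient directly from the noise term, giving $\hat D(x) = g(x)D(x)$, and then check that the induced drift matches that of a Brownian-dynamics SDE with this diffusion and some effective potential $\hat V$. Writing the desired drift as $-\hat D(x)\hat V'(x) + kT\,\hat D'(x)$, expanding $\hat D'(x) = g'(x)D(x) + g(x)D'(x)$, and equating with the drift obtained above, the $kT\,g(x)D'(x)$ terms cancel and one is left with
\[
-g(x)D(x)V'(x) = -g(x)D(x)\hat V'(x) + kT\,g'(x)D(x).
\]
Dividing through by $-g(x)D(x)$ and recognising $g'(x)/g(x) = (\ln g(x))'$ gives $\hat V'(x) = V'(x) + kT\,(\ln g(x))'$, which integrates to $\hat V(x) = V(x) + kT \ln g(x)$ up to an inconsequential additive constant. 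This is exactly \eqref{eqn:effectivePotentialTimeRescaling}.

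There is no real obstacle here; the argument is bookkeeping once one correctly applies Brownian scaling to the noise increment. The one subtle point worth flagging explicitly in the written proof is the justification of $dW_t \mapsto \sqrt{g(x_\tau)}\,dW_\tau$: this is a distributional identity for the quadratic variation under a (random) absolutely continuous time change with $dt/d\tau = g(x_\tau) > 0$, and it is what produces the half-power of $g$ needed for $\hat D = gD$ rather than $\hat D = g^2 D$. Everything else is a one-line derivative calculation using $(\ln g)' = g'/g$.
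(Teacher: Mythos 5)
Your proof is correct and follows essentially the same route as the paper's: both start from the time-rescaled SDE with drift $(-DV'+kT D')g$ and noise $\sqrt{2kT\,gD}$, and then match coefficients against the canonical Brownian-dynamics form $-\hat D\hat V'+kT\hat D'$ with $\hat D=gD$. The only difference is direction — you solve for $\hat V$ from the matching condition while the paper substitutes the stated $\hat V,\hat D$ and verifies the cancellation — and your explicit remark on the $\sqrt{g}$ Brownian-scaling factor matches the justification the paper gives when introducing \eqref{timeRescaledEqnsBeforeSub}.
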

\begin{proof}
Applying a version of the time rescaling appearing in equation \eqref{timeRescaledEqnsBeforeSub} to one-dimensional Brownian dynamics we arrive at, 
\begin{equation}
    \label{timeRescaled1DBrownianDynamicsBeforeSub}
    dx_\tau = -g(x)D(x)\frac{dV}{dx}d\tau + kT g(x) \frac{dD(x)}{dx} d\tau + \sqrt{2 kT g(x) D(x)}dW_\tau.
\end{equation}
\noindent
Inserting the identities from equation \eqref{eqn:effectivePotentialTimeRescaling} into \eqref{timeRescaled1DBrownianDynamicsBeforeSub} we obtain
\[
    \label{proof1DTimeRescaling}
    \begin{split}
        dx_\tau &= - \hat{D}(x)\frac{d}{dx}\left( \hat{V}(x) - kT \ln{g(x)}\right)d\tau + kTg(x) \frac{d}{dx}\left(\frac{\hat{D}(x)}{g(x)}\right) d\tau + \sqrt{2kT \hat{D}(x)}dW_\tau \\
        &= - \hat{D}(x)\frac{d\hat{V}}{dx}d\tau + kT \hat{D}(x) \frac{g'(x)}{g(x)}d\tau + kT\frac{d\hat{D}(x)}{dx} d\tau - kT\hat{D}(x)\frac{g'(x)}{g(x)} d\tau + \sqrt{2kT \hat{D}(x)}dW_\tau \\
        &= - \hat{D}(x)\frac{d\hat{V}}{dx}d\tau + kT \frac{\hat{dD}(x)}{dx} d\tau + \sqrt{2kT \hat{D}(x)}dW_\tau,
    \end{split}
\]
which is a transformed version of the original one-dimensional Brownian dynamics but in an effective potential $\hat{V}(x)$ and a rescaled diffusion coefficient $\hat{D}(x)$, as required.
\end{proof}

\begin{theorem}
    \label{thm:TimeRescalingPhaseSpace1D}
    In one dimension, the time-rescaled process can be used to compute phase space averages through  \eqref{reweightingTimeRescaling0}.
\end{theorem}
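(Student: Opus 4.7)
The plan is to derive \eqref{reweightingTimeRescaling0} directly from the ergodic theorem for the original (untransformed) process by performing a change of variables in the time integral. This avoids having to invoke ergodicity of the time-rescaled process and its invariant distribution, which is why the remark after the statement notes that the result does not require Brownian dynamics specifically.

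First I would write down the ergodic theorem \eqref{eqn:theErgodicTheorem} for the one-dimensional process $x_t$:
\[
\int_{-\infty}^{\infty} f(x)\rho(x)\,dx = \lim_{T_{\text{sim}} \rightarrow \infty} \frac{1}{T_{\text{sim}}} \int_{0}^{T_{\text{sim}}} f(x_t)\,dt.
\]
Next I would perform the substitution $t = t(\tau)$ defined by the time rescaling with Jacobian $dt/d\tau = g(x_\tau)$. Along a realised trajectory, this gives $dt = g(x_\tau)\,d\tau$, and the upper limit $T_{\text{sim}}$ in $t$-time corresponds to $\mathcal{T}_{\text{sim}}$ in $\tau$-time, where
\[
T_{\text{sim}} \;=\; \int_{0}^{\mathcal{T}_{\text{sim}}} g(x_\tau)\,d\tau.
\]
Substituting into both the numerator and the prefactor $1/T_{\text{sim}}$ yields
\[
\frac{1}{T_{\text{sim}}} \int_{0}^{T_{\text{sim}}} f(x_t)\,dt \;=\; \frac{\int_{0}^{\mathcal{T}_{\text{sim}}} f(x_\tau) g(x_\tau)\,d\tau}{\int_{0}^{\mathcal{T}_{\text{sim}}} g(x_\tau)\,d\tau}.
\]

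Finally I would verify that taking $\mathcal{T}_{\text{sim}} \to \infty$ is equivalent to taking $T_{\text{sim}} \to \infty$. Since $g(x) = 1/D(x)^2 > 0$ everywhere, the map $\mathcal{T}_{\text{sim}} \mapsto T_{\text{sim}}$ is strictly increasing along trajectories; to get divergence one needs that $\int_0^{\mathcal{T}_{\text{sim}}} g(x_\tau)\,d\tau \to \infty$ almost surely as $\mathcal{T}_{\text{sim}} \to \infty$, which follows from ergodicity of $x_t$ together with $\mathbb{E}_\rho[g(x)] > 0$ (or, more cheaply, the fact that $g$ is strictly positive and the trajectory is recurrent). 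Passing to the limit on both sides then gives \eqref{reweightingTimeRescaling0}.

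The only real obstacle is this last point — justifying that the time-change is well-defined for all $\tau \geq 0$ and that $T_{\text{sim}}(\mathcal{T}_{\text{sim}})$ blows up — but this is standard under the confinement assumption on $V$ already in force, and requires no properties specific to Brownian dynamics beyond $g > 0$. The rest of the proof is purely a pathwise change of variables, which is why the result extends to the general time-homogeneous SDEs mentioned in the remark.
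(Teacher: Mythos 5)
Your proposal is correct and follows essentially the same route as the paper's own proof: apply the ergodic theorem to the original process, change variables $t \to \tau$ in the time integral, and rewrite the normalising factor $T_{\text{sim}} = t(\mathcal{T}_{\text{sim}})$ as $\int_0^{\mathcal{T}_{\text{sim}}} g(x_\tau)\,d\tau$. Your extra remark justifying that $\mathcal{T}_{\text{sim}} \to \infty$ is equivalent to $T_{\text{sim}} \to \infty$ (via positivity of $g$) is a point the paper leaves implicit, but it does not change the argument.
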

\begin{proof}
From the  ergodic theorem applied to the original process,
\[
\label{ensembleAverageMethod2}
\begin{split}
\int_{-\infty}^{\infty} f(x) \rho(x) dx &= \lim_{T_{\text{sim}} \xrightarrow{} \infty} \frac{1}{T_{\text{sim}}}\int_{t=0}^{T_{\text{sim}}} f(x_t)dt.
\end{split}
\]
Changing variables $t \rightarrow \tau$ in the integration, the right-hand side becomes,
\[
\lim_{T_{\text{sim}} \xrightarrow{} \infty} \frac{1}{T_{\text{sim}}}\int_{\tau=0}^{\tau(T_{\text{sim}})}f(x_\tau) \frac{dt}{d\tau}d\tau = \lim_{T_{\text{sim}} \xrightarrow{} \infty} \frac{1}{T_{\text{sim}}} \int_{\tau=0}^{\tau(T_{\text{sim}})} f(x_\tau) g(x_\tau) d\tau.
\]
Writing $\mathcal{T}_{\text{sim}} \vcentcolon= \tau(T_{\text{sim}})$, this can be alternatively written as
\[
\lim_{\mathcal{T}_{\text{sim}} \xrightarrow{} \infty} \frac{1}{t(\mathcal{T}_{\text{sim}})} \int_{\tau=0}^{\mathcal{T}_{\text{sim}}} f(x_\tau) g(x_\tau) d\tau.
\]
Finally, we note that by integrating $\frac{dt}{d\tau}=g(x)$ between $0$ and $\mathcal{T}_{\text{sim}}$ we can obtain an expression for $t(\mathcal{T}_{\text{sim}})$,
\[
\label{tau_t_time_conversion}
t(\mathcal{T}_{\text{sim}}) = \int_{\tau=0}^{\mathcal{T}_{\text{sim}}} g(x_\tau) d\tau.
\]
Substituting this into the above equation completes the proof.
\end{proof}

\begin{theorem}
\label{thm:MultivariateLamperti}
A Lamperti-transformed process with original $\V{D}$ matrix of the form $\V{D}(\V{X})_{ij} = D_i(X_i)R_{ij}$ is an instance of Brownian dynamics if and only if $\mathbf{R}$ is diagonal. Further, when $R_{ij} = \delta_{ij}$, the effective potential is given by \eqref{eqn:LampertiMultivariateEffectivePotential}.
\end{theorem}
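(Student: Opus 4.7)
The plan is a direct Itô-calculus computation. Writing $\V{D}=\tilde{\sigma}(\V{X})\V{R}$ with $\tilde{\sigma}=\text{diag}(D_1,\ldots,D_n)$, I would apply the multivariate Lamperti transform $\V{Y}_t=\V{R}^{-1}\phi(\V{X}_t)$ from Section \ref{lampertiTransformTheory}, with $\phi_j(x)=\int_{x_{j,0}}^x D_j(z)^{-1}\,dz$, to the Brownian dynamics SDE \eqref{multivariateBrownianDiffusion}. The first step is to compute $(\V{D}\V{D}^T)_{jk}=D_j(X_j)D_k(X_k)(\V{R}\V{R}^T)_{jk}$ and its divergence $\text{div}(\V{D}\V{D}^T)$; using $\partial_k D_j(X_j)=\delta_{jk}D_j'(X_j)$, the divergence separates into a ``diagonal'' piece (coming from the $\delta_{jk}$ terms) and a ``mixing'' piece that depends on the off-diagonal structure of $\V{R}\V{R}^T$.

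Next I would apply Itô's lemma to $\phi_j(X_j)$ for each $j$. Since $\phi_j$ depends on $X_j$ alone, only the term $(dX_j)^2=2kT D_j^2(\V{R}\V{R}^T)_{jj}\,dt$ enters at second order; substituting $\phi_j'=1/D_j$ and $\phi_j''=-D_j'/D_j^2$, the $\phi_j''$ correction cancels the diagonal piece of the divergence exactly. The surviving drift of $\phi_j(X_j)$ is $-\sum_k D_k(\V{R}\V{R}^T)_{jk}\partial_k V+kT\sum_k D_k'(\V{R}\V{R}^T)_{jk}$, and its noise is $\sqrt{2kT}R_{jk}\,dW_k$. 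Premultiplying by $\V{R}^{-1}$ and using the contraction $\sum_j R^{-1}_{ij}(\V{R}\V{R}^T)_{jk}=R_{ki}$ yields unit noise $\sqrt{2kT}\,dW_i$ and a $Y_i$-drift equal to $-\sum_k D_k(X_k)R_{ki}\partial_k V+kT\sum_k D_k'(X_k)R_{ki}$, where $\V{X}=\phi^{-1}(\V{R}\V{Y})$.

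For the ``if'' direction and the effective-potential formula, specialising to $\V{R}=\V{I}$ decouples the components so that $X_k=\phi_k^{-1}(Y_k)$. A chain-rule calculation using $(\phi_i^{-1})'(y)=D_i(\phi_i^{-1}(y))$ then gives $\partial_{Y_i}V(\phi^{-1}(\V{Y}))=D_i\partial_i V$ and $\partial_{Y_i}\ln D_k(\phi_k^{-1}(Y_k))=\delta_{ik}D_i'$, so the drift equals $-\partial_{Y_i}\hat{V}$ with $\hat{V}$ the expression in \eqref{eqn:LampertiMultivariateEffectivePotential}. Together with unit noise, this identifies the $\V{Y}$ process as Brownian dynamics with $\tilde{\V{D}}=\V{I}$ and effective potential $\hat{V}$.

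The main obstacle will be the ``only if'' direction. For a non-diagonal $\V{R}$, the coupling $X_k=\phi_k^{-1}((\V{R}\V{Y})_k)$ mixes the coordinates of $\V{Y}$ inside each component of the drift, and the $\ln D_k$ contributions can no longer be attributed to individual coordinates. I would try to show that although the drift still admits a potential on simply connected $\mathbb{R}^n$, the resulting expression cannot be recast as $-\tilde{\V{D}}\tilde{\V{D}}^T\nabla\tilde{V}+kT\,\text{div}(\tilde{\V{D}}\tilde{\V{D}}^T)$ consistent with the unit noise (which forces $\tilde{\V{D}}\tilde{\V{D}}^T=\V{I}$ and hence kills the divergence term). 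A clean route is by contradiction: assume a Brownian dynamics identification exists, derive cross-constraints between $V$ and the $D_k$ that cannot hold for generic choices, and exhibit a concrete counterexample (for instance a $2\times 2$ $\V{R}$ with one off-diagonal entry together with polynomial $V$, $D_1$, $D_2$) that rules out the form.
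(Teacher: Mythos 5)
Your computation of the transformed drift and noise is correct as far as it goes, and the ``if'' half of the theorem — specialising to $\V{R}=\V{I}$, decoupling the components, and reading off the effective potential \eqref{eqn:LampertiMultivariateEffectivePotential} via $(\phi_i^{-1})'=D_i\circ\phi_i^{-1}$ — is essentially the paper's argument, carried out by direct It\^o calculus on $\phi_j(X_j)$ rather than by quoting the transformed-drift formula of Section \ref{lampertiTransformTheory}.

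The genuine gap is the ``only if'' direction, and the problem is not merely that you defer it: your plan is internally inconsistent and, within your own formulas, cannot succeed. You concede that the drift ``still admits a potential on simply connected $\mathbb{R}^n$'' and then hope to show the process is nevertheless not Brownian dynamics; but with unit noise $\sqrt{2kT}\,d\V{W}$ a drift of the form $-\nabla_{\V{Y}}\hat{V}$ \emph{is} precisely constant-diffusion Brownian dynamics (the divergence term vanishes identically for constant diffusion), so once a potential exists there is nothing left to rule out. Worse, the drift you derive,
\begin{equation*}
b_i(\V{Y}) = -\sum_k D_k(X_k)R_{ki}\,\partial_k V(\V{X}) + kT\sum_k R_{ki}\,D_k'(X_k), \qquad X_k=\phi_k^{-1}\bigl((\V{R}\V{Y})_k\bigr),
\end{equation*}
is an exact gradient for \emph{every} invertible $\V{R}$: since $\partial_{Y_i}\ln D_k(\phi_k^{-1}((\V{R}\V{Y})_k)) = D_k'(X_k)R_{ki}$ and $\partial_{Y_i}V(\phi^{-1}(\V{R}\V{Y}))=\sum_k \partial_k V\, D_k(X_k) R_{ki}$, one checks directly that $b_i=-\partial_{Y_i}\bigl[V(\phi^{-1}(\V{R}\V{Y}))-kT\sum_k\ln D_k(\phi_k^{-1}((\V{R}\V{Y})_k))\bigr]$. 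Hence the contradiction/counterexample hunt you describe (a $2\times 2$ $\V{R}$ with one off-diagonal entry and polynomial data) is doomed within your computation. The reason you and the paper land in different places is the It\^o correction: your $(dX_j)^2$ carries the weight $(\V{R}\V{R}^T)_{jj}$, and this is exactly the term that cancels the diagonal piece of $\text{div}(\V{D}\V{D}^T)$ and leaves a pure gradient; the transformed-drift formula of Section \ref{lampertiTransformTheory}, which the paper's proof invokes, applies the correction as $-\tfrac12\partial_x\sigma_j$ \emph{without} that weight, and the resulting residual (the paper's $\V{M}$-matrix term, which reduces to $\delta_{ij}$ only for diagonal $\V{R}$) is what the ``only if'' conclusion rests on. You must reconcile these two computations before the second half of the theorem can be addressed: either adopt the paper's formula and redo the algebra to exhibit the non-gradient obstruction, or, if you stand by the direct It\^o calculation, accept that your route cannot deliver the ``only if'' claim as stated.
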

\begin{proof}
The stated transformation is a multivariate Lamperti transform \eqref{multivariateTransformedLampertiProcess} with 
\[
    f(\V{X}) = - \V{D}(\V{X})\V{D}(\V{X})^T \nabla V(\V{X}) + kT \text{div}(\V{D}\V{D}^T)(\V{X}), \quad \sigma(\V{X}) = \sqrt{2kT}\V{D}(\V{X}).
\]
The transformed process therefore satisfies
\[
    \begin{split}
    dY_{i,t} = &\sum_{j=1}^n R^{-1}_{ij}\sqrt{2kT} \left(\frac{-\sum_{k=1}^n(\V{D}\V{D}^T)_{jk}\partial_k V}{\sqrt{2kT}D_j} + \frac{kT \sum_{k=1}^n\partial_k(\V{D}\V{D}^T)_{jk}}{\sqrt{2kT}D_j}-\frac{1}{2}\sqrt{2kT}\partial_j D_j\right)dt \\ & \hspace{11cm} + \sqrt{2kT}dW_i,
    \end{split}
\]
where $\partial_j \vcentcolon= \frac{\partial}{\partial X_j}$ and $V$, $\V{D}$ and $D_j$ are functions of $\V{Y}_t$ through the relations
\[
    V(\V{X}_t) = V(\phi^{-1}(\V{RY}_t)), \quad \V{D}(\V{X}_t) = \V{D}(\phi^{-1}(\V{RY}_t)), \quad D(X_{j,t}) = D(\phi^{-1}_j((\V{RY})_{j,t})).
\]
Substituting $\V{D}(\V{X})_{ij} = D_i(X_i)R_{ij}$, this becomes
\[
\begin{split}
dY_{i,t} = \sum_{j,k,l=1}^n R_{ij}^{-1} \left(\frac{- R_{jk}R_{kl}D_jD_k \partial_k V}{D_j}+kT\frac{R_{jl}R_{kl}\partial_k (D_j D_k)}{D_j}\right)dt-kT \sum_{j=1}^n R_{ij}^{-1} \partial_j D_jdt \\ + \sqrt{2kT}dW_i.
\end{split}
\]
Expanding,
\[
\begin{split}
dY_{i,t} = \sum_{k=1}^n -R_{ki}D_k \partial_k V dt + kT\left(\sum_{j,k,l=1}^n R_{ij}^{-1}R_{jl}R_{kl} \left(\partial_k D_j \frac{D_k}{D_j} + \partial_k D_k\right) - \sum_{j=1}^n R_{ij}^{-1}\partial_j D_j\right)dt \\ + \sqrt{2kT}dW_i.
\end{split}
\]
Noting that $\partial_k D_j = \delta_{kj} \partial_j D_j$, this becomes
\[
\begin{split}
dY_{i,t} = \sum_{k=1}^n -R_{ki}D_k \partial_k V dt + kT \left( \sum_{j,l} R_{ij}^{-1}R_{jl}R_{jl}\partial_k D_k + \sum_{k} R_{ki} \partial_k D_k - \sum_{j=1}^n R_{ij}^{-1} \partial_j D_j\right)dt \\ + \sqrt{2kT}dW_i.
\end{split}
\]
Changing variables so that the derivatives are with respect to $\V{Y}$ we get 
\[
\frac{\partial}{\partial X_k} = \sum_{l=1}^n \frac{\partial Y_l}{\partial X_k} \frac{\partial}{\partial Y_l} = \sum_{l=1}^n \frac{R^{-1}_{lk}}{D_k(X_k)} \frac{\partial}{\partial Y_l},
\]
and the transformed equation becomes
\[
\begin{split}
dY_{i,t} = -\partial_i V dt + kT \sum_{k=1}^n \left( \left( \sum_{l=1}^n R_{ik}^{-1}R_{kl}R_{kl}R_{kk}^{-1}\right) + R_{ki}R_{kk}^{-1} + R_{ik}^{-1}R_{kk}^{-1}\right) \nabla_{Y_k} 
 \ln D_k dt \\ + \sqrt{2kT}dW_i,
\end{split}
\]
or equivalently:
\[
dY_{i,t} = -\partial_i V dt + kT \sum_{k=1}^n M_{ik} \nabla_{Y_k} 
 \ln D_k dt + \sqrt{2kT}dW_i,
\]
where $\V{M}$ is the matrix defined in the theorem statement. 
Note that only if the matrix $\V{R}$ is diagonal (and therefore, $M_{ij} = \delta_{ij}$) is it possible to express the drift term as a gradient of an effective potential, given by
\begin{equation}
\label{effectivePotentialGeneralLamperti}
\hat{V}(\V{Y}) = V(\phi^{-1}(\V{RY})) - kT \sum_{i=1}^n \ln D_i (\phi^{-1}_i (\V{RY}_{i})).
\end{equation}
In particular, if we set $R_{ij} = \delta_{ij}$, then we identify the transformed process as constant-diffusion Brownian dynamics with an effective potential
\begin{equation}
\label{eqn:effectivePotentialLampertiI}
\hat{V}(\V{Y}) = V(\phi^{-1}(\V{Y})) - kT \sum_{k=1}^n \ln D_k (\phi^{-1}_k (Y_{k})).
\end{equation}
\begin{remark}
    The functions $D_i$ can be arbitrarily scaled in such a manner that for all $i$, $R_{ii} = 1$ in equation \eqref{effectivePotentialGeneralLamperti}. The transformed process then becomes equivalent to the case $\V{R}=\V{I}$, as discussed in Section \ref{sec:multivariateLamperti}.
\end{remark}
\end{proof}

\begin{theorem}
    \label{thm:LampertiErgodic}
    A Lamperti-transformed process with original $\V{D}$ matrix of the form $\V{D}(\V{X}) = D_i(X_i)\delta_{ij}$ can be used to compute phase-space averages through \eqref{eqn:NDLampertiErgodicTheorem}.
\end{theorem}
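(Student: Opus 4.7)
The plan is to follow the same strategy used for the one-dimensional case in Theorem \ref{thm:LampertiPhaseSpace1D}, now generalised to $n$ dimensions via a tensor-product change of variables. I would start from the ergodic theorem for the transformed process $\V{Y}_t$, which by Theorem \ref{thm:MultivariateLamperti} is constant-diffusion Brownian dynamics with effective potential $\hat{V}(\V{Y})$ given by \eqref{eqn:LampertiMultivariateEffectivePotential}. Assuming ergodicity (inherited from the confining hypothesis on $V$), the invariant measure is $\hat{\rho}(\V{Y}) = \hat{Z}^{-1}\exp(-\hat{V}(\V{Y})/kT)$, and for any integrable $g$ one has
\[
\lim_{T_{\text{sim}} \to \infty} \frac{1}{T_{\text{sim}}} \int_0^{T_{\text{sim}}} g(\V{Y}_t)\, dt = \int_{\mathbb{R}^n} g(\V{Y}) \hat{\rho}(\V{Y}) d\V{Y}.
\]

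Next, I would apply this with the specific choice $g(\V{Y}) = f(\phi^{-1}(\V{Y}))$ and change variables to $\V{X} = \phi^{-1}(\V{Y})$. Because $\phi$ acts componentwise as $\phi_i(X_i) = \int^{X_i} D_i(z)^{-1} dz$, the Jacobian is diagonal and its determinant factorises as $\prod_k (dY_k/dX_k) = \prod_k D_k(X_k)^{-1}$ (up to the $\sqrt{2kT}$ convention factor absorbed into $\phi$). Substituting the explicit form of $\hat{V}$ from \eqref{eqn:LampertiMultivariateEffectivePotential}, the exponential splits as $\exp(-V(\V{X})/kT)\prod_k D_k(X_k)$, and this product cancels exactly against the reciprocal of the Jacobian determinant. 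What remains on the right-hand side is $(Z/\hat{Z})\int f(\V{X}) \rho(\V{X})\, d\V{X}$.

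To close the argument I would verify that $\hat{Z} = Z$ (modulo any uniform constant arising from the $\sqrt{2kT}$ convention), which follows from the same componentwise change of variables applied to the normalisation integral itself: the factor $\prod_k D_k(X_k)$ from $\exp(-\hat{V}/kT)$ is precisely cancelled by the Jacobian $\prod_k D_k(X_k)^{-1}$ in $d\V{Y} = \prod_k D_k(X_k)^{-1} d\V{X}$, leaving $\int \exp(-V(\V{X})/kT)\, d\V{X} = Z$.

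The only non-routine step is the bookkeeping in the Jacobian computation and making sure the logarithmic correction $-kT \sum_k \ln D_k$ in $\hat{V}$ lines up sign-wise with the product of $D_k$'s produced by the change of variables; since both originate from the same componentwise structure of $\phi$, this cancellation is essentially forced. Everything else, including the ergodic hypothesis for $\V{Y}_t$ and the invertibility of $\phi$ (guaranteed by $D_i > 0$ making each $\phi_i$ strictly monotone), is either inherited from the hypotheses or from results already established in the preceding sections.
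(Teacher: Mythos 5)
Your proposal is correct and follows essentially the same route as the paper: apply the ergodic theorem to the transformed process $\V{Y}_t$ with its effective potential \eqref{eqn:LampertiMultivariateEffectivePotential}, change variables componentwise with Jacobian $\prod_k D_k(X_k)^{-1}$ cancelling the product of diffusion coefficients from the exponential, and identify $\hat{Z}=Z$. The only (immaterial) difference is that the paper obtains $\hat{Z}=Z$ by setting $f\equiv 1$ in the derived identity rather than by a separate change of variables on the normalisation integral.
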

\begin{proof}
We assume that the effective potential \eqref{eqn:effectivePotentialLampertiI} is such that geometric ergodicity holds for $\V{Y}_t$. Then, by applying the ergodic theorem to the transformed process, we obtain: 
\[
    \lim_{T_{\text{sim}} \rightarrow \infty} \frac{1}{T_{\text{sim}}} \int_{t=0}^{T_{\text{sim}}} f(\V{Y}_t) dt = \int_{\mathbb{R}^n} f(\V{Y})\hat{\rho}(\V{Y}) d\V{Y}.
\]
Substituting in the effective potential, we have:
\[
    = \int_{\mathbb{R}^n} f(\V{Y})\frac{1}{\hat{Z}}\exp{\left(-\frac{V(\phi^{-1}(\V{Y}))}{kT} \right)} \prod_{i=1}^n \left(D_i (\phi^{-1}_i(Y_{i,t})) \right)d\V{Y}
\]
Next, we change variables from $\V{Y}$ to $\V{X}$. The Jacobian factor is given by:
\[
    J = \left|\frac{d\V{Y}}{d\V{X}}\right| = \left| \frac{1}{D_i(X_i)} \delta_{ij}\right| = \prod_{i=1}^n \frac{1}{D_i(X_i)},
\]
which exactly cancels with the diffusion coefficients in the integral, and we have
\[
    = \int_{\mathbb{R}^n} f(\phi(\V{X}))\frac{1}{\hat{Z}}\exp{\left(-\frac{V(\V{X})}{kT} \right)}d\V{X} = \frac{Z}{\hat{Z}}\int_{\mathbb{R}^n} f(\phi(\V{X}))\rho(\V{X})d\V{X}. 
\]
Choosing $f(\V{Y})=1$ leads to $\hat{Z} = Z$, hence
\[
\lim_{T_{\text{sim}} \rightarrow \infty} \frac{1}{T_{\text{sim}}} \int_{t=0}^{T_{\text{sim}}} f(\V{Y}_t) dt = \int_{\mathbb{R}^n} f(\phi(\V{X}))\rho(\V{X})d\V{X}.
\]
Finally, if we redefine $f$ as $f \circ \phi^{-1}$, then we obtain:
\[
\lim_{T_{\text{sim}} \rightarrow \infty} \frac{1}{T_{\text{sim}}} \int_{t=0}^{T_{\text{sim}}} f(\phi^{-1}(\V{Y}_t)) dt = \int_{\mathbb{R}^n} f(\V{X})\rho(\V{X})d\V{X},
\]
as required.
\end{proof}

\begin{theorem}
    \label{thm:TimeRescalingMD}
    The effective potential of a time-rescaled Brownian dynamics process with original $\V{D}$ matrix $\V{D}(\V{X}) = D(\V{X})\V{R}$ is given by \eqref{eqn:NDEffectivePotentialTimeRescaling}.
\end{theorem}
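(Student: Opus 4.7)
The plan is to chain together the two ingredients described in Section \ref{sec:multivariateTimeRescaling}: substitute the specific structure $\V{D}(\V{X}) = D(\V{X})\V{R}$ into the multivariate Brownian dynamics \eqref{multivariateBrownianDiffusion}, apply the time rescaling with $g(\V{X}) = 1/D^2(\V{X})$, and then apply the linear map $\V{Y}_\tau = \V{R}^{-1}\V{X}_\tau$. The goal is to recognize the resulting drift as the gradient of $\hat V(\V{Y}) = V(\V{RY}) - 2kT \ln D(\V{RY})$.

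First I would compute $\V{D}\V{D}^T = D^2(\V{X})\V{R}\V{R}^T$ and its divergence. Since $\V{R}\V{R}^T$ is constant, $\operatorname{div}(D^2\V{R}\V{R}^T) = \V{R}\V{R}^T \nabla(D^2) = 2D^2\,\V{R}\V{R}^T \nabla \ln D$. This lets me rewrite \eqref{multivariateBrownianDiffusion} as
\[
d\V{X}_t = D^2(\V{X}_t)\,\V{R}\V{R}^T\bigl[-\nabla V(\V{X}_t) + 2kT\,\nabla \ln D(\V{X}_t)\bigr]dt + \sqrt{2kT}\,D(\V{X}_t)\V{R}\,d\V{W}_t.
\]

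Next I would apply the time rescaling of Section \ref{timeRescalingTheory}, multiplying the drift by $g = 1/D^2$ and the noise by $\sqrt{g} = 1/D$, which exactly cancels the factor $D^2$ in the drift and the factor $D$ in the diffusion. This yields
\[
d\V{X}_\tau = \V{R}\V{R}^T\bigl[-\nabla V(\V{X}_\tau) + 2kT\,\nabla \ln D(\V{X}_\tau)\bigr]d\tau + \sqrt{2kT}\,\V{R}\,d\V{W}_\tau,
\]
which is already constant-diffusion but anisotropic. I then change variables to $\V{Y}_\tau = \V{R}^{-1}\V{X}_\tau$; this is a linear (deterministic) transformation, so no It\^o correction is needed and the noise becomes simply $\sqrt{2kT}\,d\V{W}_\tau$.

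The main step where care is required is converting the drift to a $\V{Y}$-gradient. Using $\V{X} = \V{RY}$, the chain rule gives $\nabla_{\V{Y}} = \V{R}^T \nabla_{\V{X}}$. Applying $\V{R}^{-1}$ to the drift of $\V{X}_\tau$ produces $\V{R}^T[-\nabla_{\V{X}} V + 2kT\,\nabla_{\V{X}} \ln D]$ evaluated at $\V{X}=\V{RY}$, and by the chain rule this equals $-\nabla_{\V{Y}}\bigl[V(\V{RY}) - 2kT\ln D(\V{RY})\bigr]$. Identifying the bracketed expression as $\hat V(\V{Y})$ completes the proof; the only subtle point is tracking whether the transformed dynamics remain of Brownian-dynamics (gradient-drift) form, which is guaranteed here because $\V{R}$ is constant so the conjugation $\V{R}^{-1}(\V{R}\V{R}^T)\nabla_{\V{X}} = \V{R}^T\nabla_{\V{X}} = \nabla_{\V{Y}}$ produces an exact gradient, in contrast to the more delicate situation of Theorem \ref{thm:MultivariateLamperti}.
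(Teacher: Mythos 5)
Your proof is correct and follows essentially the same route as the paper's: substitute $\V{D}=D\V{R}$ so that $\V{D}\V{D}^T = D^2\V{R}\V{R}^T$ with $\operatorname{div}(\V{D}\V{D}^T)=\V{R}\V{R}^T\nabla(D^2)$, cancel the $D^2$ via the time rescaling $g=1/D^2$, apply $\V{Y}_\tau=\V{R}^{-1}\V{X}_\tau$, and use the chain rule $\nabla_{\V{Y}}=\V{R}^T\nabla_{\V{X}}$ to recognize the drift as $-\nabla_{\V{Y}}\hat{V}$. The only difference is presentational: you work in matrix notation whereas the paper carries out the identical computation in components.
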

\begin{proof}
The time-rescaling transform follows \eqref{eqn:timeRescalingFinal}, \eqref{eqn:timeRescalingLinearTransform} with $f(\V{X})=-\V{D}(\V{X})\V{D}(\V{X})^T \nabla V(\V{X}) + kT \text{div}(\V{D}(\V{X})\V{D}(\V{X})^T)$, which gives (we transform to constant diffusion $\sqrt{2kT}$):
\[
d\V{Y}_\tau = - \V{R}^{-1}\frac{\V{D}\V{D}^T\nabla_{\V{X}} V-kT\text{div}(\V{D}\V{D}^T)}{D^2}dt + \sqrt{2kT}d\V{W}_\tau.
\]
Here, $V$, $\V{D}$ and $D$ are functions of $\V{Y}_\tau$ through the relations:
\[
V(\V{X}_\tau) = V(\V{R}\V{Y}_\tau), \quad \V{D}(\V{X}_\tau) = \V{D}(\V{R}\V{Y}_\tau), \quad D(\V{X}_\tau) = D(\V{R}\V{Y}_\tau).
\]
Substituting $\V{D}(\V{X})=D(\V{X})\V{R}$, in components this becomes
\[
dY_{i, \tau} =  - \frac{\sum_{j} D^2R_{ji}\partial_j V - kT\sum_j R_{ji} \partial_j (D^2) }{D^2}dt + \sqrt{2kT}dW_{i,\tau},
\]
which simplifies to 
\[
dY_{i, \tau} =  \sum_j R_{ji} \left(-\partial_j V + 2kT \partial_j \ln D\right)dt + \sqrt{2kT}dW_{i,\tau}.
\]
Changing variables so that the derivatives are with respect to $\V{Y}$ we get 
\[
\frac{\partial}{\partial X_i} = \sum_{j=1}^n \frac{\partial Y_j}{\partial X_i} \frac{\partial}{\partial Y_j} = \sum_{j=1}^n R_{ji}^{-1} \frac{\partial}{\partial Y_j}.
\]
The $\V{R}$ matrix then cancels with its inverse, and the dynamics now reads
\[
d\V{Y}_{\tau} = \left(-\nabla_{\V{Y}} V(\V{RY}) + 2kT \nabla_{\V{Y}} \ln D(\V{RY}) \right)dt + \sqrt{2kT}d\V{W}_\tau,
\]
which is constant-diffusion Brownian dynamics in an effective potential $\hat{V}(\V{Y})$ given by
\[
\hat{V}(\V{Y}) = V(\V{RY})- 2kT \ln D(\V{RY}).
\]
This completes the proof.
\end{proof}

\begin{theorem}
    \label{thm:TimeRescalingMDErgodic}
    A time-rescaled process with original $\V{D}$ matrix of the form $\V{D}(\V{X}) = D(\V{X})\V{R}$ can be used to compute phase-space averages through \eqref{eqn:NDtimeErgodicTheorem}.
\end{theorem}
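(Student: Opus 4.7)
The plan is to mirror the one-dimensional argument used in the proof of Theorem \ref{thm:TimeRescalingPhaseSpace1D}, lifted to the multivariate setting with the extra linear change of variables $\V{X}_\tau = \V{R}\V{Y}_\tau$ supplied by Theorem \ref{thm:TimeRescalingMD}. Since the original multivariate process $\V{X}_t$ is assumed ergodic with invariant density $\rho(\V{X}) \propto \exp(-V(\V{X})/kT)$, the multivariate ergodic theorem \eqref{eqn:theErgodicTheorem} gives
\[
\int_{\mathbb{R}^n} f(\V{X})\rho(\V{X})d\V{X} = \lim_{T_{\text{sim}}\to\infty}\frac{1}{T_{\text{sim}}}\int_0^{T_{\text{sim}}} f(\V{X}_t)\,dt.
\]

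Next I would perform the change of variables $t \mapsto \tau$ inside the time integral. Because $dt/d\tau = g(\V{X}_\tau)$ and $\V{X}_\tau = \V{R}\V{Y}_\tau$, setting $\mathcal{T}_{\text{sim}} := \tau(T_{\text{sim}})$ gives
\[
\int_0^{T_{\text{sim}}} f(\V{X}_t)\,dt = \int_0^{\mathcal{T}_{\text{sim}}} f(\V{R}\V{Y}_\tau)\,g(\V{R}\V{Y}_\tau)\,d\tau,
\]
while integrating the same Jacobian relation between $0$ and $\mathcal{T}_{\text{sim}}$ expresses the elapsed real time as
\[
T_{\text{sim}} = \int_0^{\mathcal{T}_{\text{sim}}} g(\V{R}\V{Y}_\tau)\,d\tau.
\]
Substituting both identities into the ergodic theorem above, and noting that $T_{\text{sim}}\to\infty$ is equivalent to $\mathcal{T}_{\text{sim}}\to\infty$ (since $g>0$ is bounded away from zero on any compact set and the dynamics is recurrent), yields precisely \eqref{eqn:NDtimeErgodicTheorem}.

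The main subtlety, as in the one-dimensional case, is justifying that the time change $t \mapsto \tau$ is well-defined and maps $[0,\infty)$ bijectively onto $[0,\infty)$, i.e.\ that $\int_0^\infty g(\V{X}_\tau)\,d\tau = \infty$ almost surely so $T_{\text{sim}}\to\infty$ iff $\mathcal{T}_{\text{sim}}\to\infty$. This follows from positivity and (local) boundedness of $g = 1/D^2$ together with the ergodicity of $\V{X}_t$, which ensures that the process spends a positive fraction of time in any set of positive $\rho$-measure. Notice that, as highlighted in the remark following the statement, this argument nowhere uses the Brownian-dynamics form of the drift; it relies only on the ergodicity of the original SDE and the deterministic change-of-variables identity $dt = g(\V{X}_\tau)\,d\tau$. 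Hence the result extends to the broader class of SDEs considered in Section \ref{timeRescalingTheory}.
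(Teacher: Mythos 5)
Your proposal is correct and follows essentially the same route as the paper's proof: apply the ergodic theorem to the original process, change variables $t\mapsto\tau$ in the time integral using $dt = g\,d\tau$, express $T_{\text{sim}} = \int_0^{\mathcal{T}_{\text{sim}}} g(\V{X}_\tau)\,d\tau$, and substitute $\V{X}_\tau = \V{R}\V{Y}_\tau$. The only addition is your explicit justification that $T_{\text{sim}}\to\infty$ iff $\mathcal{T}_{\text{sim}}\to\infty$, a point the paper leaves implicit.
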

\begin{proof}
We begin with the  ergodic theorem of the original process, which states
\[
\begin{split}
\int_{\mathbb{R}^n} f(\V{X}) \rho(\V{X}) d\V{X} &= \lim_{T_{\text{sim}} \xrightarrow{} \infty} \frac{1}{T_{\text{sim}}}\int_{t=0}^{T_{\text{sim}}} f(\V{X}_t)dt.
\end{split}
\]
To express this in terms of the time-rescaled process, we change the variable $t \to \tau$ in the integration, resulting in:
\[
\lim_{T_{\text{sim}} \xrightarrow{} \infty} \frac{1}{T_{\text{sim}}}\int_{\tau=0}^{\tau(T_{\text{sim}})}f(\V{X}_\tau) \frac{dt}{d\tau}d\tau = \lim_{T_{\text{sim}} \xrightarrow{} \infty} \frac{1}{T_{\text{sim}}} \int_{\tau=0}^{\tau(T_{\text{sim}})} f(\V{X}_\tau) g(\V{X}_\tau) d\tau.
\]
Writing $\mathcal{T}_{\text{sim}} := \tau(T_{\text{sim}})$, this can alternatively be written as
\begin{equation}
\label{partialProofNDTimeRescalingErgodicity}
    \lim_{\mathcal{T}_{\text{sim}} \xrightarrow{} \infty} \frac{1}{t(\mathcal{T}_{\text{sim}})} \int_{\tau=0}^{\mathcal{T}_{\text{sim}}} f(\V{X}_\tau) g(\V{X}_\tau) d\tau,
\end{equation}
where $g(\V{X}_\tau) = 1/D^2(\V{X})$ by the definition of time rescaling.
Next, we integrate $\frac{dt}{d\tau}=g(\V{X})$ from $0$ to $\mathcal{T}_{\text{sim}}$ to obtain an expression for $t(\mathcal{T}_{\text{sim}})$,
\begin{equation}
\label{tTauTransformND}
   t(\mathcal{T}_{\text{sim}}) = \int_{\tau=0}^{\mathcal{T}_{\text{sim}}} g(\V{X}_\tau) d\tau,
\end{equation}
Substituting equation \eqref{tTauTransformND} and the relation $\mathbf{X}_\tau = \mathbf{R}\mathbf{Y}_\tau$ into equation \eqref{partialProofNDTimeRescalingErgodicity}, we have:
\[
    \int_{\mathbb{R}^n} f(\V{X}) \rho(\V{X}) d\V{X} = \lim_{\mathcal{T}_{\text{sim}} \rightarrow \infty} \frac{\int_{\tau=0}^{\mathcal{T}_{\text{sim}}} f(\V{R}\V{Y}_\tau) g(\V{R}\V{Y}_\tau) d\tau}{\int_{\tau=0}^{\mathcal{T}_{\text{sim}}} g(\V{R}\V{Y}_\tau) d\tau},
\]
as required.
\end{proof}

\begin{theorem}
    \label{thm:NoDRDDiffusion}
    Performing a time rescaling followed by a Lamperti transform to a multivariate Brownian dynamics process with $\V{D}$ matrix $\V{D}(\V{X}) = \V{D}^{(1)}(\V{X})\V{R}\V{D}^{(2)}(\V{X})$, where $\V{R}$ is not diagonal, results in a constant-diffusion process that is not Brownian dynamics.
\end{theorem}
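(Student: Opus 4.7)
Plan: The strategy is to apply the two transforms sequentially and then verify, via the Poincar\'e integrability criterion (symmetric Jacobian of the drift), that the drift of the resulting process cannot be written as $-\nabla_{\V{Y}} \hat{V}$ for any scalar $\hat{V}$. Constant-diffusion Brownian dynamics requires a gradient drift, so a failure of integrability establishes that the transformed process is not Brownian dynamics when $\V{R}$ is not diagonal.

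First, I would carry out the time rescaling $dt/d\tau = g(\V{X}) = D(\V{X})^{-2}$. Using $\V{D}\V{D}^{T} = D^{2}\,\V{R}(\V{D}^{(2)})^{2}\V{R}^{T}$, the noise reduces to $\sqrt{2kT}\,\V{R}\V{D}^{(2)}(\V{X})\,d\V{W}_{\tau}$, and the divergence contribution reorganises through the identity $\tfrac{2}{D}\V{R}(\V{D}^{(2)})^{2}\V{R}^{T}\nabla D = 2\V{R}(\V{D}^{(2)})^{2}\V{R}^{T}\nabla\ln D$ into a genuine gradient. A computation analogous to Theorem \ref{thm:TimeRescalingMD} then shows the time-rescaled process is Brownian dynamics with diffusion matrix $\V{R}\V{D}^{(2)}$ and modified potential $\tilde V(\V{X}) = V(\V{X}) - 2kT\ln D(\V{X})$. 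Second, I would apply the Lamperti-type change of variables $\V{Y}_{\tau} = \V{R}^{-1}\phi(\V{X}_{\tau})$ with componentwise $\phi_{i}(x) = \int^{x} D_{i}^{(2)}(z)^{-1}\,dz$, and use the multivariate It\^o formula to write out $d\V{Y}_{\tau}$. Mirroring the index computation in the proof of Theorem \ref{thm:MultivariateLamperti}, the drift splits into a pure gradient term inherited from $\tilde V$ together with additional contributions of the form $kT\sum_{k} M_{ik}\,\partial_{Y_{k}}\ln D_{k}^{(2)}$, where $\V{M}$ is a specific combination of entries of $\V{R}$ and $\V{R}^{-1}$ arising from the interaction of the constant rotation with the diagonal $\V{D}^{(2)}$.

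The decisive step is to test the mixed-partial condition on the residual (non-$\tilde V$) piece of the drift. By Poincar\'e's lemma on $\mathbb{R}^{n}$ the drift is a gradient if and only if $(M_{il} - M_{li})(D_{l}^{(2)})^{-1}\partial_{l} D_{l}^{(2)}$ vanishes identically, and since $\partial_{l} D_{l}^{(2)}$ depends nontrivially on $X_{l}$ while $(M_{li})$ does not, this forces $\V{M}$ itself to be symmetric; a direct calculation (exactly as for the $\V{M}$-matrix in Theorem \ref{thm:MultivariateLamperti}) shows $\V{M}$ is symmetric if and only if $\V{R}$ is diagonal. The main obstacle is algebraic bookkeeping rather than any conceptual step: tracking which index contractions reassemble into gradients that can be absorbed into $\hat V$ and which do not. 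A useful shortcut is to first dispose of the case $\V{D}^{(1)} \equiv \V{I}$, for which the time rescaling is trivial and the problem reduces to Theorem \ref{thm:MultivariateLamperti}; the general case then follows upon noting that the extra drift produced by the time-rescaling step is itself the gradient of $2kT\ln D$ and therefore cannot cancel the antisymmetric obstruction already present in the Lamperti step.
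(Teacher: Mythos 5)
Your overall route is the same as the paper's: push the process through the time rescaling and then a Lamperti-type change of variables, and show that the resulting drift is not a gradient. Two of your refinements are genuinely better than what the paper writes: the observation that the time rescaling alone already yields \emph{bona fide} Brownian dynamics with diffusion matrix $\V{R}\V{D}^{(2)}$ and potential $\tilde V=V-2kT\ln D$ is correct (the divergence term reorganises exactly as you say), and replacing the paper's ``expanding this expression does not lead to a great simplification'' with an explicit mixed-partials (Poincar\'e) test is the right way to make the non-gradient claim precise.

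The gap is in the Lamperti step. The transform $\V{Y}=\V{R}^{-1}\phi(\V{X})$ of Section \ref{lampertiTransformTheory} produces unit noise only when the diagonal factor multiplies $\V{R}$ from the \emph{left}, i.e.\ for noise of the form $\sigma(\V{X})\V{R}\,d\V{W}$ with $\sigma_{ii}=\sigma_i(X_i)$. After your time rescaling the noise matrix is $\V{R}\V{D}^{(2)}(\V{X})$ --- diagonal on the \emph{right} --- and applying $\V{Y}=\V{R}^{-1}\phi(\V{X})$ leaves the noise $\sqrt{2kT}\sum_{j,k}R^{-1}_{ij}R_{jk}\bigl(D_k(X_k)/D_j(X_j)\bigr)\,dW_k$, which is not constant unless $\V{R}$ is diagonal. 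So the process you then subject to the Poincar\'e test is not constant-diffusion, and the theorem's conclusion (``the resulting \emph{constant-diffusion} process is not Brownian dynamics'') does not yet attach to it. For the same reason your proposed shortcut --- ``the case $\V{D}^{(1)}\equiv\V{I}$ reduces to Theorem \ref{thm:MultivariateLamperti}'' --- fails: that theorem treats $\V{D}_{ij}=D_i(X_i)R_{ij}$, the mirror-image class. The paper instead peels off $\V{R}$ first via the linear map $\V{Y}=\V{R}^{-1}\V{X}$ (making the noise matrix diagonal) and only then applies the componentwise antiderivative; you would need to follow that order, taking the antiderivative in the $X$ variables since $D_i(X_i)=D_i((\V{R}\V{Y})_i)$ is not a function of $Y_i$ alone. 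A second, smaller slip: the integrability condition is not that $(M_{il}-M_{li})\,\partial_l\ln D^{(2)}_l$ vanishes. Because $\partial_{Y_l}\ln D^{(2)}_l$ and $\partial_{Y_i}\ln D^{(2)}_i$ are functions of \emph{different} variables, equality of mixed partials generically forces $M_{il}=M_{li}=0$ for $i\neq l$, i.e.\ $\V{M}$ must be diagonal, not merely symmetric --- this actually strengthens your conclusion, but the criterion as you stated it is incorrect.
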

\begin{proof}
    First, consider the time-rescaled process $\V{X}_{\tau}$, where $\frac{dt}{d\tau}=1/D^2(\V{X})$, which obeys the dynamics
\[
d\V{X}_\tau = -\frac{\V{D}\V{D}^T\nabla_{\V{X}} V-kT\text{div}(\V{D}\V{D}^T)}{D^2}dt + \sqrt{2kT}\V{R}\V{D}^{(2)}d\V{W}_\tau.
\]
Defining a transformed process $\V{Y}_{\tau} = \V{R}^{-1}\V{X}_\tau$, then applying the multidimensional It\^o formula gives 
\[
d\V{Y}_\tau = -\V{R}^{-1}\left(\frac{\V{D}\V{D}^T\nabla_{\V{X}} V\vert_{\V{R}\V{Y}_\tau}-kT\text{div}(\V{D}\V{D}^T)\vert_{\V{R}\V{Y}_\tau}}{D^2}\right)dt + \sqrt{2kT}\V{D}^{(2)}d\V{W}_\tau,
\]
where
\[
    V = V(\V{RY}_\tau), \quad \V{D} = \V{D}(\V{RY}_\tau), \quad \V{D}^{(2)}=\V{D}^{(2)}(\V{R}\V{Y}_\tau), \quad D = D(\V{RY}_{\tau}).
\]

\noindent
Finally, we apply a Lamperti transform to remove the noise dependence on $\V{D}^{(2)}$. The transformed process $\V{Z}_{\tau}= \phi(\V{Y}_\tau)$ then satisfies (using Einstein summation convention for sums over repeated indices)
\[
d\V{Z}_{i, \tau} = - R^{-1}_{ij}\left( \frac{D_{jk}D_{lk} \partial_l V\vert_{\V{R}\phi^{-1}(\V{Z}_\tau)} - kT \partial_l (D_{jk}D_{lk})\vert_{\V{R}\phi^{-1}(\V{Z}_\tau)}}{D^2D_i}\right)dt + \sqrt{2kT}dW_{i,\tau}.
\]
Changing variables,
\[
\frac{\partial}{\partial X_k} = \sum_{l=1}^n \frac{\partial Z_l}{\partial X_k} \frac{\partial}{\partial Z_l} = \sum_{l=1}^n \frac{R^{-1}_{lk}}{D_k(X_k)} \frac{\partial}{\partial Z_l}
\]
and substituting $D_{ij} = \delta_{ik}\delta_{lj}R_{kl}DD_l$ this becomes
\[
\begin{split}
d\V{Z}_{i, \tau} = - R^{-1}_{ij}\Bigg( \frac{\delta_{jm}\delta_{nk} \delta_{lp}\delta_{qk}R_{mn}R_{pq}D^2 D_nD_q R^{-1}_{rl} \nabla_{Y_r} V}{D^2D_iD_l} \\ \frac{- kT R^{-1}_{rl}\nabla_{Y_r} (\delta_{jm}\delta_{nk} \delta_{lp}\delta_{qk}R_{mn}R_{pq}D^2 D_nD_q)}{D^2D_iD_l}\Bigg)dt + \sqrt{2kT}dW_{i,\tau},
\end{split}
\]
which simplifies to 
\[
d\V{Z}_{i, \tau} = - R^{-1}_{ij}\left( \frac{ R_{jk}R_{lk}D^2 D^2_k R^{-1}_{rl} \nabla_{Y_r} V - kT R^{-1}_{rl}\nabla_{Y_r} (R_{jk}R_{lk}D^2 D^2_k)}{D^2D_iD_l}\right)dt + \sqrt{2kT}dW_{i,\tau},
\]
\[
d\V{Z}_{i, \tau} \stackrel{\text{(no sum } i\text{)}}{=} - \left( \frac{ R_{li}D^2 D^2_i R^{-1}_{rl} \nabla_{Y_r} V - kT R^{-1}_{rl}R_{li}\nabla_{Y_r} (D^2 D^2_i)}{D^2D_iD_l}\right)dt + \sqrt{2kT}dW_{i,\tau}.
\]
Expanding this expression does not lead to a great simplification of terms. In particular, the non-vanishing of the $\V{R}$ matrix elements in the $dt$ term means that the drift term cannot be written as a gradient of a potential energy, hence this is not Brownian dynamics.
\end{proof}

\begin{theorem}
    \label{thm:DDDiffusion}
    Consider a multivariate Brownian dynamics process $\V{X}_t$ following \eqref{multivariateBrownianDiffusion}, where the diffusion tensor $\V{D}$ is defined as 
    \[
    \V{D}(\V{X}) = \V{D}^{(1)}(\V{X})\V{D}^{(2)}(\V{X}),
    \]
    where $\V{D}^{(1)}$ and $\V{D}^{(2)}$ are given by \eqref{eqn:Dforms}. Then the transformed process $\V{Y}_{\tau} = \phi(\V{X}_\tau)$, resulting from a time rescaling where $\frac{dt}{d\tau}=g(\V{X})\vcentcolon=1/D^2(\V{X})$, followed by a Lamperti transform given by
    \[
    Y_{i,\tau} = \sqrt{2kT}\int_{x_0}^{X_{i,\tau}}\frac{1}{D_i(x)}dx \vcentcolon= \sqrt{2kT} \phi_i(X_{i, \tau}),
    \]
    satisfies the constant-diffusion Brownian dynamics process:
    \[
    dY_{i,\tau} = - \nabla_{Y_i}\hat{V}(\V{Y})dt + \sqrt{2kT}dW_i,
    \]
    where $\hat{V}(\V{Y})$ is the effective potential defined as
    \[
    \hat{V}(\V{Y}) = V(\phi^{-1}(\V{Y}))-2kT \ln D(\V{Y}) - kT \sum_{i=1}^n \ln D_k (\phi^{-1}_k(Y_{k,\tau})).
    \]
\end{theorem}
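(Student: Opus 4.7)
The plan is to prove the theorem by composing the two transforms and invoking Theorem \ref{thm:MultivariateLamperti} for the Lamperti step, so the only genuinely new computation is the time-rescaling step. First I would apply the time rescaling $dt/d\tau = 1/D^2(\V{X})$ and verify that the intermediate process $\V{X}_\tau$ is itself Brownian dynamics with a modified potential $\tilde V := V - 2kT\ln D$ and diffusion matrix $\V{D}^{(2)}(\V{X})$. Then I would apply the multivariate Lamperti transform of Theorem \ref{thm:MultivariateLamperti} (with $\V{R}=\V{I}$) to this intermediate Brownian dynamics.

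For the time-rescaling step, I would start from \eqref{multivariateBrownianDiffusion} with $\V{D}=D\,\V{D}^{(2)}$. Since $\V{D}^{(2)}$ is diagonal, $\V{D}$ itself is diagonal and $(\V{D}\V{D}^T)_{ij} = D^2 D_i^2 \delta_{ij}$. Scaling the drift by $g=1/D^2$ and the noise by $\sqrt{g}$ as in \eqref{timeRescaledEqnsBeforeSub} kills the $D$ factor in the noise, leaving $\sqrt{2kT}\,\V{D}^{(2)}(\V{X}_\tau)\,d\V{W}_\tau$. The $i$-th drift component becomes $-D_i^2\,\partial_i V + (kT/D^2)\,\partial_i(D^2 D_i^2)$, and expanding by the product rule (using $\partial_i D_i = dD_i/dX_i$) gives $-D_i^2\,\partial_i V + 2kT\,D_i^2\,\partial_i \ln D + 2kT\,D_i\,\partial_i D_i$. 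Regrouping as $-D_i^2\,\partial_i(V - 2kT\ln D) + kT\cdot 2 D_i\,\partial_i D_i$ and recognising $2 D_i\,\partial_i D_i = \mathrm{div}(\V{D}^{(2)}(\V{D}^{(2)})^T)_i$, this is exactly the Brownian-dynamics drift associated with potential $\tilde V = V - 2kT\ln D$ and diffusion $\V{D}^{(2)}$.

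For the Lamperti step, the intermediate SDE now matches the hypothesis of Theorem \ref{thm:MultivariateLamperti} with $\V{R}=\V{I}$: diagonal, coordinate-separable diffusion $\V{D}^{(2)}$ with entries $D_i(X_i)$. Applying that theorem with potential $\tilde V$ in place of $V$ immediately gives constant-$\sqrt{2kT}$ Brownian dynamics in $\V{Y}$ with effective potential, via \eqref{eqn:LampertiMultivariateEffectivePotential},
\[
\hat V(\V{Y}) = \tilde V(\phi^{-1}(\V{Y})) - kT\sum_{k=1}^n \ln D_k(\phi^{-1}_k(Y_k)) = V(\phi^{-1}(\V{Y})) - 2kT\ln D(\phi^{-1}(\V{Y})) - kT\sum_{k=1}^n \ln D_k(\phi^{-1}_k(Y_k)),
\]
matching the claimed formula.

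The main obstacle is the bookkeeping in the time-rescaling step, where $D$ couples all coordinates while each $D_i$ is univariate; keeping partial vs.\ ordinary derivatives straight and collecting terms correctly is the only place one might slip. Once the algebraic identity $(kT/D^2)\,\partial_i(D^2 D_i^2) = 2kT\,D_i^2\,\partial_i \ln D + kT\,\mathrm{div}(\V{D}^{(2)}(\V{D}^{(2)})^T)_i$ is in hand, the remainder of the argument is a direct appeal to Theorem \ref{thm:MultivariateLamperti}, and the ergodic identity follows by the same composition applied to \eqref{eqn:NDtimeErgodicTheorem} followed by the $\V{Y} = \phi(\V{X}_\tau)$ change of variable.
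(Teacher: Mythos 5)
Your proposal is correct, and it reaches the paper's conclusion by a more modular route than the paper's own proof. The paper also performs the time rescaling first and the Lamperti transform second, but it carries out the Lamperti step by brute force: it writes the composite drift $-R^{-1}_{ij}\bigl(D_{jk}D_{lk}\partial_l V - kT\,\partial_l(D_{jk}D_{lk})\bigr)/(D^2 D_i)$, substitutes $D_{ij}=D\,D_i\,\delta_{ij}$, simplifies the resulting expression to $-D_i\partial_i V + 2kT\,(D_i/D)\,\partial_i D + kT\,\partial_i D_i$, and only then changes variables to $\V{Y}$ and reads off the effective potential. You instead insert an intermediate claim — that the time-rescaled process $\V{X}_\tau$ is itself Brownian dynamics with potential $\tilde V = V - 2kT\ln D$ and diffusion matrix $\V{D}^{(2)}$ — and then invoke Theorem \ref{thm:MultivariateLamperti} with $\V{R}=\V{I}$ as a black box. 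Your algebra for the intermediate claim checks out: since $\V{D}$ is diagonal with entries $DD_i$, one has $(\V{D}\V{D}^T)_{ij}=D^2D_i^2\delta_{ij}$, and the identity $(kT/D^2)\,\partial_i(D^2D_i^2) = 2kT\,D_i^2\,\partial_i\ln D + kT\,\partial_i(D_i^2)$ is exactly the regrouping needed to recognise the drift as that of Brownian dynamics in $\tilde V$ with diffusion $\V{D}^{(2)}$. Note that this intermediate step is genuinely new content rather than a citation: Theorem \ref{thm:TimeRescalingMD} covers only $\V{D}=D\V{R}$, not $\V{D}=D\V{D}^{(2)}$, so you are right to verify it explicitly rather than appeal to it. What your decomposition buys is reuse of the already-proven Lamperti result and a cleaner conceptual picture (time rescaling stays within the Brownian dynamics class, shifting the potential by $-2kT\ln D$; the Lamperti transform then contributes the $-kT\sum_k\ln D_k$ term); what the paper's direct computation buys is that it never needs to assert or verify the intermediate characterisation. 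Both yield the same effective potential, and your final formula matches the one in the paper's proof (the theorem statement's $\ln D(\V{Y})$ is a typo for $\ln D(\phi^{-1}(\V{Y}))$, which both you and the paper's proof write correctly).
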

\begin{proof}
    The time-rescaling transformation gives 
    \begin{equation}
    d\V{X}_\tau = - \frac{\V{D}\V{D}^T \nabla_{\V{X}}V - kT \text{div}(\V{D}\V{D}^T)}{D^2}dt + \sqrt{2kT}\V{D}^{(2)}(\V{X})d\V{W}_\tau.
    \end{equation}
    Applying the Lamperti transform then gives
    \[
    d\V{Y}_{i, \tau} = \sqrt{2kT}\left(- \frac{D_{ij}D_{kj}\partial_{k}V -kT \partial_k (D_{ij}D_{kj})}{\sqrt{2kT}D^2 D_i} - \frac{1}{2}\sqrt{2kT}\partial_i D_i\right)dt + \sqrt{2kT}dW_i,  
    \]
    where $V(\V{X}_\tau)$, $\V{D}(\V{X}_\tau)$, $D(\V{X}_\tau)$ and $\V{D}_i(\V{X}_\tau)$ are functions of $\V{Y}_\tau$ through the relation $\V{X}_\tau = \phi^{-1}(\V{Y}_\tau)$.
    Since
    \[
    D_{ij}(\V{X}) = \sum_{k=1}^n D^{(1)}_{ik}(\V{X})D^{(2)}_{kj}(\V{X}) = \sum_{k=1}^n \delta_{ik} \delta_{kj} D(\V{X}) D_k(X_k) = D(\V{X})D_i(X_i),
    \]
    this becomes
    \[
        dY_{i, \tau} = - D_i \partial_i V dt + kT \frac{\partial_i(D^2 D_i^2)}{D^2 D_i} dt - kT \partial_i D_i dt + \sqrt{2kT}dW_i
    \]
    which simplifies to
    \[
        dY_{i, \tau} = - D_i \partial_i V dt + 2kT \frac{D_i}{D} \partial_i D dt + kT \partial_i D_i dt + \sqrt{2kT}dW_i.
    \]
    Changing variables so that the derivatives are with respect to $\V{Y}$ we get
    \[
    \frac{\partial}{\partial X_k} = \sum_{l=1}^n \frac{\partial Y_l}{\partial X_k} \frac{\partial}{\partial Y_l} = \sum_{l=1}^n \frac{\delta_{lk}}{D_k(X_k)} \frac{\partial}{\partial Y_l}=\frac{1}{D_k}\frac{\partial}{\partial Y_k},
    \]
    and the transformed equation becomes
    \[
    dY_{i, \tau} = - \nabla_{Y_i} V dt + 2kT \nabla_{Y_i} \ln{D} dt + kT \nabla_{Y_i} \ln{D_i} dt + \sqrt{2kT}dW_i,
    \]
    which we identify as Brownian motion in an effective potential
    \vspace{-0.1cm}
    \[
    \hat{V}(\V{Y}) = V(\phi^{-1}(\V{Y})) - 2kT \ln{D(\phi^{-1}(\V{Y}))} - kT \sum_{i=1}^n \ln{ D_i(\phi^{-1}_i(Y_i)},
    \]
    \vspace{-0.1cm}
    as required.
\end{proof}

\appendix

\end{document}